\def\Z{{  \mathbb Z }}
\def\Q{{\mathbb Q}}
\def\F{{\mathbb F}}
\def\IPAD{{\mathit{IPAD}}}
\def\magma{{\sc Magma}}
\def\pari{{\sc PARI/GP}}
\def\la{\langle}
\def\ra{\rangle}
\def\ab{{\mathrm{ab}}}
\def\Gal{\mathrm{Gal}}
\def\Cl{\mathrm{Cl}}
\newtheorem{theorem}{Theorem}[section]
\newtheorem{conjecture}[theorem]{Conjecture}
\newtheorem{thm-fr}{Th\'eor\`eme}[section]
\newtheorem{lem-fr}{Lemme}[section]
\newtheorem{cor-fr}{Corollaire}[section]
\newtheorem{lemma}[theorem]{Lemma}
\newtheorem{prop-fr}[theorem]{Proposition}
\theoremstyle{remark}
\newtheorem{remark}[theorem]{Remark}
\newtheorem{rem-fr}[theorem]{Remarque}
\newtheorem{definition}[theorem]{Definition}
\newtheorem{def-fr}[theorem]{D\'efinition}
\begin{document}

\title{Heuristics for $2$-class Towers of Cyclic Cubic Fields}

\begin{abstract}
We consider the Galois group $G_2(K)$ of the maximal unramified $2$-extension of $K$ where  $K/\mathbb{Q}$ is cyclic of degree $3$. We also consider the group $G^+_2(K)$ where ramification is allowed at infinity. In the spirit of the Cohen-Lenstra heuristics, we identify certain types of pro-$2$ group as the natural spaces where $G_2(K)$ and $G^+_2(K)$ live when the $2$-class group of $K$ is $2$-generated. While we do not have a theoretical scheme for assigning probabilities, we present data and make some observations and conjectures about the distribution of such groups.
\end{abstract}

\author{Nigel Boston}
\address{Department of Mathematics, University of Wisconsin - Madison,  
480 Lincoln Drive, Madison, WI 53706, USA}
\email{boston@math.wisc.edu}

\author{Michael R. Bush}
\address{Department of Mathematics,
Washington and Lee University,
Lexington, Virginia 24450, USA}
\email{bushm@wlu.edu}

\subjclass[2010]{11R29, 11R16}
\keywords{Cohen-Lenstra heuristics, class field tower, cyclic cubic field, ideal class group}

\maketitle

\section{Introduction}

In the 1980s, Cohen and Lenstra~\cite{CL2} gave a theoretical framework for the variation of class groups of quadratic fields.  
The Cohen-Lenstra idea is twofold: the first part is to identify, in any relevant number-theoretical situation, the correct collection of groups which can arise as the groups of number-theoretical interest; the second part is to define a natural measure or probability distribution on this collection.
The heuristic then is that the probability attached to the group in the identified collection is the same as the frequency of occurrence as a group of number-theoretical interest.

In \cite{BBH} and \cite{BBH2}, we initiated the study of a natural non-abelian extension of Cohen and Lenstra's work. 
For a number field $K$ and rational prime $p$, we considered the Galois group $G_p(K)$ of the maximal unramified $p$-extension of $K$. Note that the maximal abelian quotient $G_p(K)^{\ab}$ is isomorphic to the $p$-class group of $K$ by class field theory.  We call $G_p(K)$ the $p$-class tower group of $K$, since that is how it first arose in the 1930s in the work of Artin, Hasse, Furtw\"angler and others.  In \cite{BBH}, together with Farshid Hajir, we treated the case of imaginary quadratic fields and in \cite{BBH2} the case of real quadratic fields, when $p$ is an odd prime.  The content of each of these papers included a) an identification of the ``right'' collection of groups, b) an investigation of an associated measure giving the frequency of groups within that collection, and c) a numerical study of $p$-class tower groups of both types of quadratic field to test the conjecture developed using a) and b).  

In this work, we treat cyclic cubic fields in a similar manner for the most interesting case when $p = 2$.  The situation when $p > 3$ gives rise to a theory very much like that elucidated in our earlier papers, and one that fits naturally into the more general framework recently developed by Liu, Wood, and Zureick-Brown \cite{LWZ}. The case of $p = 2$ is as yet uncharted and introduces some new and unexpected considerations, related to the fact that the ground field $\Q$ contains a primitive $p$th root of unity \cite{Ga}, \cite{M08}. Here even the study of frequency of $p$-class groups is relatively untouched, with some first steps taken by Malle and Adam \cite{MA}, building on \cite{M08} and \cite{M10}. The case of $p=3$ is quite different, and related to extensions of genus theory \cite{Kl}, \cite{MAD}. 

The organization of the paper is as follows. In Section~2, we introduce our main objects of interest, cyclic cubic fields and their narrow and wide $2$-class tower groups. This leads us to focus on a subclass of $2$-groups, called valid, and we describe how these are represented by the nodes of a tree. In Section~3, we associate to each valid group a $3$-tuple of abelian groups, called its IPAD. The IPADs of tower groups are readily calculated and can then be used to identify the groups or at least narrow down the possibilities. In Section~4, we use embedding theory and Galois cohomology to show that the narrow $2$-class tower groups lie in a certain class of groups, which we call $2$-special, and in Section~5 we likewise identify a class of groups, called $2$-select, that contains the wide $2$-class tower groups.

We conjecture that every $2$-special group does arise as the narrow tower group of some cyclic cubic field and that every $2$-select group arises as the wide tower group of some such field. Ultimately we would like to know the joint distribution, but there appear to be some as yet unknown subtleties as to the support of this distribution. After summarizing previous work on the distribution for the wide $2$-class groups, in Section~7 we present data on the narrow and wide IPADs of the first $500,000$ cyclic cubic fields with $2$-generated $2$-class group, ordered by discriminant. This is then used to substantiate the claims at the start of this paragraph.

Although we cannot obtain general formulae for how frequently a given group arises as a narrow or wide $2$-class tower group, the data give us explicit, albeit somewhat mysterious, values for the joint distribution of the $2$-class $3$ quotients of the narrow and wide tower groups. We can also pin down the joint distribution induced on the abelianizations (the narrow and wide $2$-class groups). 

One consequence is an explanation of Rubinstein-Salzedo's observation that cyclic cubic fields with $2$-class group of order $4$ have $2$-class tower group of order $4$ about $53.7\%$ of the time and of order $8$ about $46.3\%$ of the time. At the same time we explain why this is still consistent with the conjecture that unramified $(\Z/2)^2$-extensions of cyclic cubic fields are as likely as not to extend to unramified $Q_8$-extensions.

Lastly, in Section~8, we consider the possibility of infinite narrow $2$-class towers. By random search, filtering out presentations that yield finite or $2$-adic analytic groups, we are led to a very special family of groups. Moreover, the corresponding wide $2$-class tower group then turns out to be a certain group of order $2048$.


\section{Preliminaries}

Throughout this paper, $K$ will denote a cyclic cubic field with $\Delta = \Gal(K/\Q) = \langle \sigma \rangle$ where $\sigma$ is an automorphism of order $3$.  
Let $\Cl_2(K)$ denote the $2$-class group of $K$ and $G_2(K) = \Gal(K^{nr,2}/K)$ where $K^{nr,2}/K$ is the maximal unramified $2$-extension of $K$. We will call $G_2(K)$ the {\em $2$-class tower group of $K$}. By class field theory, $\Cl_2(K)$ is isomorphic to the abelianization $G_2(K)/[G_2(K),G_2(K)]$. If we allow ramification at infinity, then we get the narrow analogs of these objects. In particular, we will let $\Cl^+_2(K)$ denote the narrow $2$-class group and call $G^+_2(K)$ the {\em narrow $2$-class tower group of $K$}. To distinguish, $Cl_2(K)$ and $G_2(K)$ are often called {\em wide}.

By the Schur-Zassenhaus Theorem, the Galois group $\Delta$ acts by conjugation on $G_2(K)$ and $G^+_2(K)$. Moreover $G_2(K)$ is the quotient of $G^+_2(K)$ by the closure of complex conjugation under this action.
Under the isomorphism to the corresponding wide and narrow $2$-class groups, the induced action on the abelianizations translates into the usual Galois action on ideal classes. Let $V$ denote the unique irreducible $\mathbb{F}_2[\Delta]$-module of dimension $2$ over $\mathbb{F}_2$.

\begin{lemma} As $\mathbb{F}_2[\Delta]$-modules, $\Cl_2(K)/\Cl_2(K)^2$ and $\Cl^+_2(K)/\Cl^+_2(K)^2$ are each isomorphic to $V^d$ for some non-negative integer $d$. It follows that the generator rank $g$ of $\Cl_2(K)$ (which equals that of $G_2(K)$) is even, namely $2d$, and equal to that of $\Cl^+_2(K)$ (and so of $G^+_2(K)$).
\end{lemma}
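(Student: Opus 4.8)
The plan is to analyze the $\mathbb{F}_2[\Delta]$-module structure of $\Cl_2(K)/\Cl_2(K)^2$ (and identically the narrow version), using the fact that $\Delta$ has order $3$, coprime to $2$. Since $|\Delta| = 3$ is invertible in $\mathbb{F}_2$, the group algebra $\mathbb{F}_2[\Delta]$ is semisimple by Maschke's theorem, so every $\mathbb{F}_2[\Delta]$-module is a direct sum of irreducibles. The first step is therefore to classify the irreducible $\mathbb{F}_2[\Delta]$-modules: there is the trivial module $\mathbb{F}_2$, and since the polynomial $x^3 - 1 = (x-1)(x^2+x+1)$ factors over $\mathbb{F}_2$ with $x^2 + x + 1$ irreducible (it has no root in $\mathbb{F}_2$), the remaining irreducible is the $2$-dimensional module $V$ on which $\sigma$ acts with minimal polynomial $x^2 + x + 1$. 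Thus $\mathbb{F}_2[\Delta] \cong \mathbb{F}_2 \oplus V$ and the only irreducibles are $\mathbb{F}_2$ and $V$.

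The second step is to rule out any trivial summand, i.e.\ to show $\Cl_2(K)/\Cl_2(K)^2$ has no $\Delta$-fixed quotient (equivalently no $\Delta$-fixed submodule, by semisimplicity). This is where the arithmetic enters. A nonzero $\Delta$-fixed quotient of $\Cl_2(K)/\Cl_2(K)^2$ would correspond to an unramified $\mathbb{Z}/2$-extension $L/K$ with $L/\Q$ Galois and $\Delta$ acting trivially on $\Gal(L/K)$, hence $L/\Q$ abelian of degree $6$. Since $L/K$ is unramified and $K/\Q$ is unramified outside the primes dividing the conductor with $K/\Q$ totally ramified at those primes (conductor-discriminant considerations for a cyclic cubic field), such an $L$ would have to be ramified only where $K$ is, but an abelian degree-$6$ extension of $\Q$ containing $K$ and unramified over $K$ is impossible --- the genus-theory/conductor count shows $K$ admits no such extension (one checks the ray class group of $\Q$ of the relevant conductor has no quotient producing it; for the narrow case one allows the infinite place but $\Q$ has no nontrivial narrow class behavior since $\mathbb{Z}^\times = \{\pm 1\}$ already accounts for it). I expect this step --- pinning down precisely why no trivial $\Delta$-submodule can occur --- to be the main obstacle, and the cleanest argument is likely via the Hochschild--Serre / transfer (Verlagerung) interpretation: the $\Delta$-coinvariants of $\Cl_2(K)$ inject into (a piece of) $\Cl_2(\Q) = 1$, or more carefully, an ambiguous class argument combined with the fact that no rational prime ramifies in $K^{nr,2}/\Q$ beyond those ramifying in $K/\Q$.

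Given these two steps, $\Cl_2(K)/\Cl_2(K)^2 \cong V^d$ for some $d \ge 0$, and identically $\Cl^+_2(K)/\Cl^+_2(K)^2 \cong V^{d'}$ for some $d' \ge 0$. The third step records the consequences. The generator rank $g$ of $\Cl_2(K)$ equals $\dim_{\mathbb{F}_2} \Cl_2(K)/\Cl_2(K)^2 = 2d$, hence even; the generator rank of $G_2(K)$ equals that of its abelianization $\Cl_2(K)$ by the Burnside basis theorem for pro-$2$ groups, so it is also $2d$. For the equality $d = d'$: the natural surjection $\Cl^+_2(K) \twoheadrightarrow \Cl_2(K)$ has kernel generated by the classes of the archimedean places, but $K$ is totally real of degree $3$ (a cyclic cubic field is totally real), and the relevant sign-map image is constrained; more directly, the kernel of $\Cl^+_2(K)/\Cl^+_2(K)^2 \to \Cl_2(K)/\Cl_2(K)^2$ is a quotient of $(\Z/2)^{r_1}/(\text{image of units})$ which carries the $\Delta$-action and, being spanned by a $\Delta$-orbit of signs at the three real places, is either $0$ or the regular-representation piece --- and a dimension/parity count together with Step 2 (no trivial summand) forces it to vanish, giving $d = d'$. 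I would present Steps 1 and 3 briskly and devote the bulk of the writeup to Step 2.
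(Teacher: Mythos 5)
There is a genuine gap in your Step 3, in the deduction that $d = d'$. You assert that the kernel of $\Cl^+_2(K)/\Cl^+_2(K)^2 \to \Cl_2(K)/\Cl_2(K)^2$ is ``either $0$ or the regular-representation piece'' and that a parity count together with the absence of trivial summands ``forces it to vanish.'' Neither consideration does. The kernel of $\Cl^+_2(K) \to \Cl_2(K)$ is a $\Delta$-equivariant quotient of $(\pm 1)^3/\langle(-1,-1,-1)\rangle \cong V$, hence is $0$ or $V$; since $V$ is $2$-dimensional and contains no trivial summand, both parity and your Step 2 are silent about whether its image in the Frattini quotient is nonzero, and a priori one only gets $d' \in \{d, d+1\}$. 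Moreover the kernel itself really is nontrivial for a positive proportion of fields: the paper's own data (Section 7.4, Table 9) exhibit many $K$ with $\Cl^+_2(K) \cong [2^{n+1},2^{n+1}]$ and $\Cl_2(K) \cong [2^n,2^n]$, so any argument concluding that the kernel vanishes is false, and what must actually be shown is that when the kernel is $V$ it lies inside $(\Cl^+_2(K))^2$. The paper closes this gap by invoking the Armitage--Fr\"ohlich theorem, which for a totally real cubic field bounds the difference of the $2$-ranks of $\Cl^+_2(K)$ and $\Cl_2(K)$ by $\lfloor 3/2 \rfloor = 1$; combined with the evenness of both ranks this forces equality. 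Some genuine arithmetic input about unit signatures is needed at this point, and your sketch does not supply it.

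Your Steps 1 and 2 are sound in substance but take a different and longer route than the paper, which disposes of fixed vectors in one line: the product $I\,\sigma(I)\,\sigma^2(I)$ is the extension of the norm ideal of $I$, which is principal in $\Q$ with a positive generator, hence trivial in both $\Cl_2(K)$ and $\Cl^+_2(K)$; a $\sigma$-fixed class therefore satisfies $I^3=1$ and, lying in a $2$-group, is trivial, after which Maschke gives $V^d$ immediately. Your class-field-theoretic alternative does work --- a trivial one-dimensional quotient yields a cyclic degree-$6$ field $L = KF$ with $F/\Q$ quadratic ramified at some finite prime $p$, forcing $2 \mid e_p(L/\Q)$ while unramifiedness of $L/K$ at finite primes gives $e_p(L/\Q) = e_p(K/\Q) \in \{1,3\}$ --- and this covers the narrow case too since every quadratic field ramifies at a finite prime. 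But as written you hedge between several sketches, and the ``ray class group of $\Q$'' phrasing is not itself a proof: $(\Z/m)^\times$ certainly has quotients of order $6$, so the contradiction must come from the inertia computation above, not from a counting of quotients.
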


\begin{proof} Since $\gcd(2,3) = 1$, no ideal class $I$ can be fixed under this action since the product $I \sigma(I) \sigma^2(I)$ is trivial. By Maschke's Theorem, it follows that the $\mathbb{F}_2[\Delta]$-module $\Cl_2(K)/\Cl_2(K)^2$ decomposes as a direct sum of copies of $V$. In particular, we see that the generator rank is always even since $V$ has dimension $2$. Similar arguments apply to $\Cl^+_2(K)$ and  $G^+_2(K)$. By~\cite{AF}, the generator ranks of $\Cl_2(K)$ and $\Cl^+_2(K)$ differ by at most $1$ and so they are always equal in our situation.
\end{proof}

The group $G_2(K)$ is a finitely presented pro-$2$ group and $\Cl_2(K)$ is a finite abelian $2$-group. Before proceeding further we recall some notions and terminology from group theory that will be useful.  For a pro-$2$ group $G$, recall that $d(G)=\dim_{\F_2} H^1(G,\F_2)$ and $r(G)=\dim_{\F_2} H^2(G,\F_2)$ are its minimal number of (topological) generators and relations, respectively.  Setting $g = d(G)$, we can view $G$ as a quotient of the free pro-$2$ group $F$ on $g$ generators $x_1,...,x_g$.

The $2$-central series of a pro-$2$ group $G$ is the series of closed subgroups defined by:
\[ P_0(G) = G, \ \ P_{n+1}(G) = P_n(G)^2 [G,P_n(G)]. \] Note that $P_1(G)$ is the Frattini subgroup, $\Phi(G)$.
If $P_n(G) = 1$ for some $n$ and $n$ is minimal, then we say $G$ has $2$-class $n$. Finite $2$-groups always have finite $2$-class. If $G$ is a finitely generated pro-$2$ group, then $Q_n(G): = G/P_n(G)$ is a finite $2$-group which we call the maximal $2$-class $n$ quotient of $G$. 
We say that $G$ is a descendant of $Q_n(G)$ and call $Q_{n+1}(G)$ an immediate descendant or child of $Q_n(G)$ (if not isomorphic to it). If we connect groups that are immediate descendants with an edge, then the finite $g$-generated $2$-groups form a tree with root the elementary abelian $2$-group $(\Z/2)^g$.
There is an algorithm due to O'Brien~\cite{O}, implemented in \magma~\cite{magma}, that yields all the children of any given $p$-group, thus allowing one to compute successive levels of this tree.

In later sections, we will restrict attention to the situation where $G = G_2(K)$ has generator rank $g = d(G) = 2$. In this case, $Q_1(G) \cong (\Z/2)^2$. For brevity's sake, we use the standard notation $[2,2]$ for this abelian group. By O'Brien's algorithm, $[2,2]$ has $7$ children, namely $[2,4], D_8, 
Q_8, [4,4]$, two other groups of order $16$, and $Q_2(F)$ of order $32$. 

\begin{definition} A $2$-generator (pro)-$2$-group with an automorphism of order $3$ will be called {\em valid}. 
\end{definition}

All the Galois groups we are considering are valid, thanks to the $\Delta$-action. If $G$ is valid, then since the subgroups in its $2$-central series are characteristic, the quotient $Q_n(G)$  is itself valid for all $n \geq 1$.  Of the $7$ children of $[2,2]$, only $Q_8, [4,4],$ and $Q_2(F)$ are valid. Of these, $Q_8$ has no children, whereas the only valid descendants of $[4,4]$ are the abelian groups $[2^n,2^n]$. This can be verified by first showing that the $2$-covering group for $[2^n,2^n]$ (as defined in~\cite{O}) is the group $G^*$ of order $2^{2n + 3}$ with presentation
\[  G^* = \langle x, y \mid x^{2^{n+1}}, y^{2^{n+1}}, (x,y)^2, (x,(x,y)), (y,(x,y)) \rangle \]
where $(x,y) = x^{-1} y^{-1} x y$. By considering suitable quotients of this group, one can verify that $[2^n,2^n]$ has $4$ children for all $n \geq 2$ and that only the child $[2^{n+1},2^{n+1}]$ is valid. 

It follows that most of the groups we encounter later are descendants of $Q_2(F)$. This has $93$ children, but only $11$ of them are valid. 

Of these $11$, $3$ are childless; $1$ has just $2$ valid children, both childless; $1$ has $3$ valid children and a very straightforward subtree of descendants; 3 have $7$ valid children; $1$ has $11$ valid children; $1$ has $17$ valid children; and $1$ has $149$ valid children.

Figure 1 depicts this tree. Note that $\la n,i \ra$ is shorthand for SmallGroup(n,i) in the standard Magma database~\cite{magma}. 

\begin{figure}
\caption{Tree of valid $2$-groups of $2$-class at most $4$.}
\begin{center}
\tikzstyle{every node}=[draw,shape=rectangle, scale=0.6]
\begin{forest}
for tree={s sep*=0.8, l sep*=2.6, inner sep=4pt}
[{$[2,2]$}
	[$Q_8$]
	[,phantom]
	[{$[4,4]$} [{$[8,8]$} [{$[16,16]$} ]]]
	[{$Q_2(F)$} [{$\la 64,19 \ra $}]  [,phantom] [{$\la 128,5 \ra $} [{$3$ children}]] [{$\la 128,36 \ra $} [{$11$ children}]] [{$\la 128,40 \ra $}] [{$\la 128,41 \ra $}] [{$\la 256,2 \ra $} [{$7$ children}]] [{$\la 256,35 \ra $} [{$7$ children}]] [{$\la 256,36 \ra $} [{$2$ children}]] [{$\la 256,38 \ra $} [{$7$ children}]] [{$\la 512,3 \ra $} [{$17$ children}]] [{$Q_3(F)$} [{$149$ children}]]  ] 
]
\end{forest}
\end{center}
\end{figure}
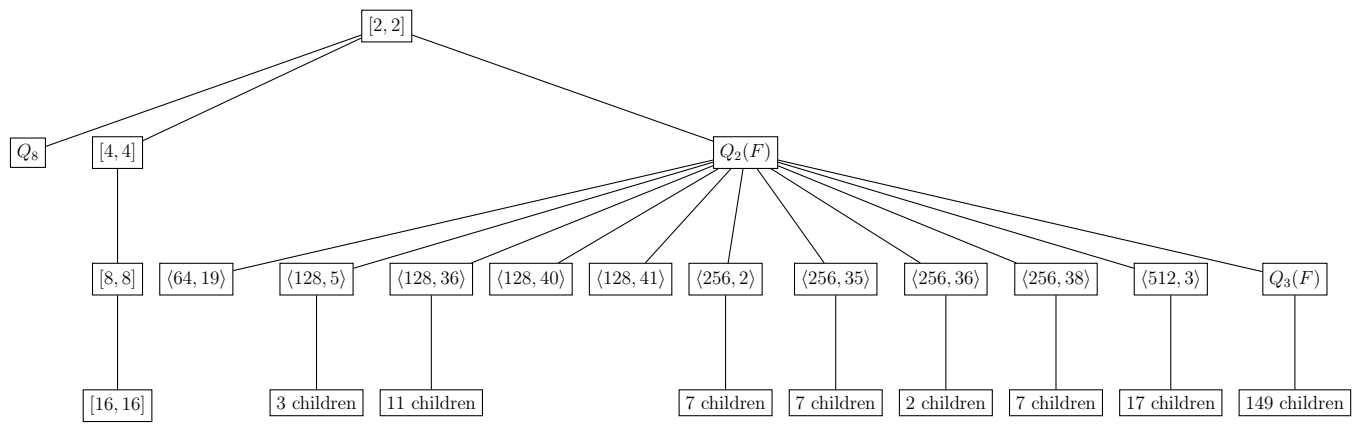


\section{IPADs}

Note that the use of IPAD in this section differs slightly from our usage in \cite{BBH} and \cite{BBH2}, where we did not include abelianizations of any index $p^2$ subgroups.

 Let $G$ be a $2$-generator pro-$2$ group. Then it has three (open) maximal subgroups $M$ and if $G$ is valid, these form an orbit under the order-three automorphism and so are isomorphic. 

\begin{definition} The IPAD of a valid $G$, $\IPAD(G)$, consists of a triple of finite abelian groups, namely $G^{ab}, M^{ab}, \Phi(G)^{ab}$.
\end{definition}

For example, for $G= \mathrm{SmallGroup}(8,4)$ ($=Q_8$), $\IPAD(G) = [[2,2]; [4]; [2]]$, whereas for $G= \mathrm{SmallGroup}(64,19)$, $\IPAD(G)=[[4,4]; [2,2,4]; [2,2,4]]$. Several small valid $2$-groups are determined up to isomorphism by their IPAD. This includes the two groups just given.

The motivation behind this definition is that we will be able to compute the IPADs of various Galois groups of interest to us, raising the question of whether these IPADs enable us to identify the Galois groups. 

We can determine which groups correspond to a given IPAD by tracking how the IPAD varies as we move down the O'Brien trees introduced in Section 2. In particular, the IPAD of a group is a quotient of the IPAD of any of its descendants and if the IPADs are the same for a group and a child, then every subsequent descendant will have the same IPAD. 

Table~\ref{IPAD_data} organizes IPADs into clumps according to the complexity of their entries and indicates which valid groups have each IPAD. Different clumps correspond to different parts of the tree: for instance the first clump consists of the quaternion group of order $8$ and the abelian groups $[2^n,2^n]$.

{\footnotesize
\begin{table}[htp]
\caption{Groups with a given IPAD.}
\label{IPAD_data}
\begin{center}
\begin{tabular}{ |c|c| } 
 \hline

$[[2,2];[2],[]$        &          $\la 4,2 \ra$ \\
\hline
$[2,2];[4];[2]$        &        $\la 8,4 \ra$ \\
\hline
$[4,4];[2,4];[2,2]$     &     $\la 16,2 \ra$ \\
\hline
$[8,8];[4,8];[4,4]$     &     $\la 64,2 \ra$ \\
\hline
$[16,16];[8,16];[8,8]$   &  $\la 256,39 \ra$ \\
\hline
    ...      & \\
    \hline
$[4,4];[2,2,4];[2,2,2]$  &   $\la 32,2 \ra$ \\
\hline
$[4,4];[2,2,4];[2,2,4]$   &  $\la 64,19 \ra$ \\
\hline
$[4,4];[2,2,4];[2,4,4]$   &  $\la 128,40 \ra, \la 128,41 \ra$ \\
\hline
$[4,4];[2,4,4];[4,4,4]$   & $\la 256,36 \ra$, its $2$ children of order $1024$ (both childless) \\
\hline
$[4,4];[2,2,8];[4,4,4]$   &  $\la 256,38 \ra$ \\
\hline
$[4,4];[2,2,8];[4,4,8]$   &  $\la 512,858 \ra$ \\
\hline
$[4,4];[2,2,8];[4,8,8]$    &  $3$ groups order $1024$ (all childless - children of $\la 256,38 \ra$) \\
\hline
$[4,4];[2,2,16];[8,8,8]$  &  $3$ groups order $2048$ (children of $\la 256,38 \ra$), all descendants of $2$ of these \\
\hline
 ... & \\
 \hline
$[8,8];[2,4,8];[2,4,4]$   &   $\la 128,5 \ra$ \\
\hline
$[8,8];[2,4,8];[2,8,8]$   &   $\la 512,104 \ra, \la 512,106 \ra$ (both childless - children of $\la 128,5 \ra$) \\
\hline
$[8,8];[2,4,8];[4,4,4]$    &  $\la 256,2 \ra, \la 512,451 \ra$ (childless - child of $\la 256,2 \ra$) \\
\hline
$[8,8];[2,4,16];[4,8,8]$ &  $2$ children of $\la 256,2 \ra$ and all their descendants \\
\hline
$[8,8];[2,8,8];[4,8,8]$  &    $2$ children of $\la 256,2 \ra$ and all their descendants \\
\hline
... & \\
\hline
$[4,4];[2,2,4];[2,2,2,2,2]$  & $\la 128,36 \ra$ and all its descendants \\
\hline
$[4,4];[2,2,8];[2,2,2,2,4]$  &  $\la 256,35 \ra$ and all its descendants (includes $\la 512,844 \ra$)\\
\hline
$[8,8];[2,4,8];[2,2,2,4,4]$  &  $\la 512,3 \ra$ and many but not all its descendants \\
\hline
... & \\

 \hline
\end{tabular}
\end{center}
\end{table}
}

\section{The class of narrow tower groups}

Our first interesting observation was that there appears to be no cyclic cubic field $K$ such that $G_2(K) \cong [4,4]$. This can be proved using embedding theory.

\begin{theorem} There is no cyclic cubic field $K$ whose $2$-class tower group is isomorphic to $[4,4]$. \end{theorem}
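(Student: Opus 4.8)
The plan is to argue by contradiction. Assume $G_2(K)\cong A:=[4,4]$; then $\Cl_2(K)\cong A$, the field $K^{nr,2}$ is the wide Hilbert $2$-class field $H$ of $K$ with $\Gal(H/K)\cong A$, and by the Lemma the conjugation action of $\Delta$ on $A$ is fixed-point-free, so $A$ is the free rank-one module $\Z_2[\zeta_3]/4$ and $\Gal(H/\Q)\cong A\rtimes\Delta$ by Schur--Zassenhaus. To contradict the maximality of $H$ I would exhibit an everywhere-unramified $Q_8$-extension $N/K$: being non-abelian, $N$ cannot lie in the abelian field $H$, so $K^{nr,2}\supsetneq H$. (This cannot be done with genus theory alone: feeding the unramified cyclic quartic subextensions $K\subset L\subset H$ into the ambiguous class number formula --- all ramification indices $1$, all unit indices trivial by the Hasse norm theorem --- shows only that $\Cl_2(H)=1$ is \emph{equivalent} to $\Cl_2(L)\cong\Z/4$, which genus theory does not exclude; this is what makes the statement non-obvious.)

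The unramified $Q_8$-extension is produced by embedding theory, and this is genuinely a $p=2$ phenomenon. Let $M\subseteq H$ be the subextension with $\Gal(M/K)\cong V$ (the reduction of $A$ modulo squares); since $M/K$ is unramified at every place one may write $M=K(\sqrt{d_1},\sqrt{d_2})$ with $d_1,d_2\in K^\times$ totally positive and generating square ideals. The surjection $\rho\colon G_K\twoheadrightarrow V$ cutting out $M$ lifts through $Q_8\twoheadrightarrow V$ precisely when the obstruction $\rho^\ast(\varepsilon)\in H^2(G_K,\F_2)=\mathrm{Br}(K)[2]$ vanishes, where $\varepsilon$ is the class of $1\to\Z/2\to Q_8\to V\to1$; by Witt's criterion $\rho^\ast(\varepsilon)=(d_1,d_2)_K+(-1,d_1d_2)_K$, and \emph{both} terms vanish because $M/K$ is unramified everywhere: the only places at which these Hilbert symbols could be nontrivial are those above $2$, where each $d_i$ lies in a square class for which $K(\sqrt{d_i})$ is locally unramified, and the symbol of two such classes --- and of $-1$ against such a class --- is trivial (norm groups of unramified local extensions contain all units), while totally positive $d_i$ contribute nothing at the real places. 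Thus a $Q_8$-extension of $K$ extending $M$ exists; what remains is to arrange that it be unramified at every place. Unramified local solutions exist at each $v$ (the local $V$-extension, being unramified, has cyclic Galois group and hence degree at most $2$, so its lift to $Q_8$ may be taken cyclic of degree at most $4$ and unramified), and the obstruction to patching these into a global everywhere-unramified solution lies in a small group controlled by $\Cl_2(K)/2\cong V$, the unit group of $K$, and the local data at the primes above $2$; it is here that the $\Delta$-action and the hypothesis $A\cong\Z_2[\zeta_3]/4$ are used to force the obstruction to vanish. (Equivalently one may work over $\Q$, lifting $\Gal(H/\Q)=A\rtimes\Delta$ through $Q_2(F)\rtimes\Delta\twoheadrightarrow A\rtimes\Delta$ with kernel $\Z/2$, using that $Q_2(F)$ is the unique valid $2$-group admitting a surjection onto $[4,4]$ with kernel $\Z/2$.)

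The step I expect to be the main obstacle is exactly this final patching: forcing the $Q_8$-extension to be unramified at the primes above $2$ and at the three real places of the totally real field $K$. This is where $p=2$ bites, since $\zeta_2=-1$ lies in $K$ but $\zeta_4=i$ does not, so one must analyze the archimedean local embedding conditions and the behavior of the Hilbert symbol $(-1,\,\cdot\,)_K$ with care, the $\Delta$-equivariance of the whole configuration being what keeps the bookkeeping finite. The crux will be to show that the resulting global obstruction --- a priori an element of a small $\F_2$-vector space of candidate ``new relations'' built from $\Cl_2(K)/2$ and the units --- is forced to vanish, yielding an everywhere-unramified $Q_8$-extension of $K$ and the desired contradiction.
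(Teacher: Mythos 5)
Your overall strategy---derive a contradiction by producing an everywhere-unramified non-abelian $2$-extension of $K$---is sound, and your computation that the Witt obstruction $(d_1,d_2)_K+(-1,d_1d_2)_K$ vanishes is correct as far as it goes. But there is a genuine gap exactly where you flag ``the main obstacle'': vanishing of the Witt obstruction only produces \emph{some} $Q_8$-extension of $K$ containing $M$, and the step that makes it unramified at every place is never carried out. That step cannot be routine, because the statement ``every everywhere-unramified $V$-extension of a cyclic cubic field embeds in an everywhere-unramified $Q_8$-extension'' is \emph{false}: the data of Section~7 and Conjecture~7.8 (reconciling Rubinstein-Salzedo's $53.7\%/46.3\%$ split) indicate this happens for only about half of the fields with $\Cl_2(K)\cong[2,2]$, even though the Witt obstruction vanishes for all of them. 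So the hypothesis $\Cl_2(K)\cong[4,4]$ must enter the patching argument in an essential way, and your sketch never identifies how; the assertion that ``the $\Delta$-action and the hypothesis $A\cong\Z_2[\zeta_3]/4$ are used to force the obstruction to vanish'' is a restatement of what has to be proved, not a proof of it.

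The paper's proof sidesteps the local-global patching entirely by working over $\Q$ rather than over $K$: Fr\"ohlich's theorem on central extensions produces, from $L/K$ unramified at all finite primes and Galois over $\Q$ with group $G$, a Galois extension $E/\Q$ containing $L$, still unramified over $K$ at all finite primes, with $\Gal(E/L)$ isomorphic to the Schur multiplier $M(G\rtimes\Delta)$. For $G=[4,4]$ this multiplier is $\Z/4$, and the only possible ramification in $E/L$ is at the infinite places, governed by complex conjugation --- a central element of order at most $2$, which therefore cannot generate $\Gal(E/L)\cong\Z/4$. Quotienting by it leaves a nontrivial subextension of $E/L$ unramified at all finite and infinite places, contradicting maximality of $L$. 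Note where the hypothesis bites: it is the fact that $M([4,4]\rtimes\Delta)$ is strictly larger than $\Z/2$ that defeats complex conjugation; for $G=[2,2]$ the multiplier is only $\Z/2$, which complex conjugation can generate, and that is exactly why only ``half'' of unramified $[2,2]$-extensions climb to unramified $Q_8$-extensions. Any completion of your argument would have to reproduce this mechanism, at which point the Fr\"ohlich route is both shorter and cleaner.
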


\begin{proof} Let $L/K$ be a Galois $2$-extension with Galois group $G$ which is unramified at all finite primes and Galois over $\mathbb{Q}$. Then there exists a Galois extension $E/\mathbb{Q}$, containing $L$, unramified over $K$ at all finite primes, with Gal$(E/L)$ isomorphic to the Schur multiplier $M(G \rtimes \Delta)$~\cite{Fr} (Corollary 2 to Theorem 3.13). In particular, for $G = [4,4]$,  $M(G \rtimes \Delta) \cong \Z/4$ and Gal$(E/\Q) \cong \la 192,4 \ra$. Complex conjugation is an element of order $1$ or $2$ inside its center (= Gal$(E/L)$) and in particular therefore does not generate it. It follows that there is a nontrivial subextension of $E/L$ unramified at all finite and infinite primes, so $L$ cannot be the maximal unramified $2$-extension of $K$.
\end{proof}

More generally, this same argument suggests that we should focus on the narrow class tower group, which we have denoted $G^+_2(K)$. 
The above embedding theory argument extends to imply that $G^+_2(K)$ belongs to a certain special family of groups. This theory is elucidated in~\cite{LWZ} where the authors call this Property E. We will call groups with Property E special.

\begin{theorem} Suppose $G$ is a finite $2$-group which is valid and satisfies $d(G)=2$. The following are equivalent.

(1) $G$ has no central extension by $\Z/2$ that is valid. 

(2) The Schur multiplier $M(G \rtimes \Delta)$ is trivial.

(3) Let $F$ be free pro-$2$ on $2$ generators $x,y$. Let $\sigma$ be the automorphism of order $3$ defined by $\sigma(x)=y^{-1}, \sigma(y)=xy^{-1}$. Let $X = \{u^{-1}\sigma(u) : u \in \Phi(F)\}$. For $u \in X$, let $\pi_u$ denote the pair $\{u,\sigma(u)\}$. $G$ is a finite quotient of $F$ by a finite number of pairs $\pi_u$.

\end{theorem}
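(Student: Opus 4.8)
Here is the plan. The whole argument factors through one module-theoretic condition. Fix a minimal presentation $G=F/R$ with $R$ a closed normal subgroup of $F$ that is invariant under the order-three automorphism $\sigma$ of statement~(3); this is possible because $d(G)=d(F)=2$ forces $R\subseteq\Phi(F)$, and the order-three action on $F/\Phi(F)\cong V$ is unique up to conjugacy, so $\sigma$ may be normalized (by a change of generators) to induce on $G/\Phi(G)$ exactly the given $\Delta$-action. Consider the $\F_2[\Delta]$-module $W:=R/R^2[F,R]$, whose $\F_2$-dual is $H^1(R,\F_2)^G$ and whose dimension is $r(G)$. Since $|\Delta|=3$ is invertible in $\F_2$, one has $W=W^\Delta\oplus(\sigma-1)W$ with $\sigma-1$ an isomorphism on the second summand; so the condition $(\ast)$: ``$W^\Delta=0$'', equivalently ``$\sigma-1$ is surjective on $W$'', makes sense, and I will show that each of (1), (2), (3) is equivalent to $(\ast)$.

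For (1)$\iff(\ast)$: a $2$-generated valid central extension $1\to\Z/2\to\hat G\to G\to1$ may be written $\hat G=F/R'$ with $R'\subseteq R$ of index two and $R'$ again $\sigma$-invariant (here $d(\hat G)=2$ is part of the meaning of ``central extension by $\Z/2$'', since otherwise $G\times\Z/2$ would be a trivial valid one; and the $\sigma$-invariance of $R'$ is the delicate point discussed below). Centrality gives $[F,R]\subseteq R'$ and $R/R'\cong\Z/2$ gives $R^2\subseteq R'$, so $R^2[F,R]\subseteq R'$ and $W\twoheadrightarrow R/R'\cong\Z/2$ is a $\Delta$-equivariant surjection onto the trivial module, forcing $W^\Delta\cong W_\Delta\neq0$. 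Conversely, if $W^\Delta\neq0$ there is a $\Delta$-equivariant surjection $W\twoheadrightarrow\F_2$ onto the trivial module; pulling back its kernel gives a $\sigma$-invariant $R'$ with $R^2[F,R]\subseteq R'\subseteq R$ of index two and $[F,R]\subseteq R'$, and then $F/R'$ is a $2$-generated valid central $\Z/2$-extension of $G$ (the inherited $\bar\sigma$ on $F/R'$ has order dividing $3$ and is nontrivial on the Frattini quotient).

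For the other two: For (2), the $\Delta$-equivariant five-term exact sequence of $1\to R\to F\to G\to1$ together with $d(G)=d(F)$ identifies $H^2(G,\F_2)\cong\Hom_{\F_2}(W,\F_2)$ as $\F_2[\Delta]$-modules, so $H^2(G,\F_2)^\Delta\cong\Hom(W^\Delta,\F_2)$; meanwhile universal coefficients give $H^2(G,\F_2)\cong\Hom(M(G),\F_2)\oplus\mathrm{Ext}^1_\Z(G^{\ab},\F_2)$ with $G^{\ab}/2\cong V$, so taking $\Delta$-invariants and using $V^\Delta=0$ yields $H^2(G,\F_2)^\Delta\cong\Hom(M(G)^\Delta,\F_2)$. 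Comparing, $(\ast)\iff M(G)^\Delta=0$. Finally a restriction--corestriction argument using $\gcd(|\Delta|,|G|)=1$ shows $M(G\rtimes\Delta)$ is a $2$-group and that restriction to $G$ identifies it with $M(G)^\Delta$, so $(\ast)\iff M(G\rtimes\Delta)=1$, which is (2). For (3): if $(\ast)$ holds, pick $w_1,\dots,w_m\in R\subseteq\Phi(F)$ (with $m=r(G)$) whose images $(\sigma-1)\bar w_i$ form an $\F_2$-basis of $W$, and set $u_i:=w_i^{-1}\sigma(w_i)\in R$; then the $\bar u_i$ span $W$, so $\{u_i\}$ generates $R$ as a closed normal subgroup of $F$ (Nakayama: a subset of $R$ does so iff its image generates $W$). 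The normal closure of $\{u_i,\sigma(u_i)\}_i$ is $\sigma$-stable --- because $u_i\,\sigma(u_i)\,\sigma^2(u_i)=w_i^{-1}\sigma^3(w_i)=1$ makes $\sigma^2(u_i)$ redundant --- hence equals $R$, which is precisely presentation~(3) with each $u_i\in X$. Conversely, presentation~(3) makes $\bar u_i,\sigma(\bar u_i)$ span $W$ with $\bar u_i+\sigma\bar u_i+\sigma^2\bar u_i=0$; applying any $\Delta$-equivariant $q\colon W\to\F_2$ (trivial target) to that relation gives $3\,q(\bar u_i)=q(\bar u_i)=0$, so $q$ kills the spanning set and vanishes, i.e.\ $W^\Delta=W_\Delta=0$.

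The genuinely delicate point --- everything else being linear algebra in $W$ plus standard cohomology --- is the lifting step used to put a candidate valid central extension $\hat G$ in the form $F/R'$ with $R'$ invariant under the same $\sigma$: one must produce an automorphism of $\hat G$ of order exactly three (not merely order three modulo the kernel $\Z/2$) that acts on the Frattini quotient through the prescribed copy of $V$. Coprimality rescues this: any lift of $\sigma$ lies in a coset of the pro-$2$ group of automorphisms acting trivially on the Frattini quotient, hence has order $3\cdot2^k$, and its $2^k$-th power is an order-three lift with a conjugate action on the Frattini quotient, which can then be normalized by a change of generators. With that in hand the three equivalences follow as above.
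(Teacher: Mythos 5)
The paper offers no argument of its own here: its entire ``proof'' is the citation to Proposition 3.12 of \cite{LWZ}. Your write-up is therefore a genuinely different contribution --- a self-contained proof --- and it runs along the lines one would expect that proposition's proof to take: everything is routed through the $\F_2[\Delta]$-module $W=R/R^2[F,R]$ of a $\sigma$-stable minimal presentation $G=F/R$, using the identification $\Hom(W,\F_2)\cong H^2(G,\F_2)$ from the five-term sequence (valid because $d(G)=d(F)=2$), the coprime decomposition $W=W^\Delta\oplus(\sigma-1)W$, the Lyndon--Hochschild--Serre collapse $M(G\rtimes\Delta)\cong M(G)_\Delta\cong M(G)^\Delta$, and the identity $u\,\sigma(u)\,\sigma^2(u)=1$ for $u=w^{-1}\sigma(w)$, which is exactly what makes the normal closure of a pair $\{u,\sigma(u)\}$ $\sigma$-stable. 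The Nakayama-type generation criterion, the universal-coefficients step (you only use the exact sequence, which is natural, after taking $\Delta$-invariants --- the non-natural splitting is never needed), and the $(3)\Leftrightarrow(\ast)$ bookkeeping are all sound.

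One point needs more care than you give it, and it is not quite the one you flag. In the direction ``a valid central extension exists $\Rightarrow W^\Delta\neq 0$'' you must know that the order-three automorphism $\hat\tau$ of $\hat G$ stabilizes the central subgroup $Z\cong\Z/2$, so that the extension lives in the category of $\Delta$-groups; your coprimality argument controls the \emph{order} of a lift but says nothing about whether $\hat\tau(Z)=Z$ in the first place (a priori $\hat\tau$ could permute three central subgroups of order $2$ cyclically, and then it does not descend to $G=\hat G/Z$ at all). Under the equivariant reading of (1) --- which is the one in \cite{LWZ}, where extensions are taken in the category of $\Gamma$-groups --- this is part of the hypothesis and there is no gap; under the literal wording of the paper's statement you would need an extra argument for this case. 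Two smaller points worth making explicit: the $\sigma$-equivariant lift $F\to\hat G$ of $F\to G$ (hence the $\sigma$-stable $R'$) exists because the set of lifts is a torsor under $\Hom(F/\Phi(F),\Z/2)\cong V$ and $H^1(\Delta,V)=0$; and the $\Delta$-module $W$ is independent of the chosen order-three automorphism of $G$ because all order-three subgroups of $\Aut(G)$ are conjugate (Schur--Zassenhaus applied to the preimage in $\Aut(G)$ of an order-three subgroup of $\GL_2(\F_2)$). With those points spelled out, your argument is complete.
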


\begin{proof}
Proposition 3.12 of \cite{LWZ}. 
\end{proof}

\begin{definition} A group satisfying any of these equivalent statements will be called \it{special}.\rm
\end{definition}

An exhaustive check finds that there are only $6$ special groups of order at most $512$, namely $\la 8,4 \ra, \la 64,19 \ra, \la 256,36 \ra, \la 512,451 \ra, \la 512,844 \ra, \la 512,858 \ra$. 

\begin{definition} Call the group \it{$k$-special}\rm \ if it can be presented as in (3) above, using no more than $k$ pairs. 
\end{definition}

Galois cohomology implies that if $G = G^+_2(K)$ with $K$ cyclic cubic, then $r(G)-d(G) \leq 2$. This follows from (11.12) in Section 11.3 of~\cite{Ko}. It follows that:

\begin{theorem} If $K$ is a cyclic cubic field with $d(Cl_2(K)) = 2$, then $G^+_2(K)$ is a $2$-special $2$-group if it is finite.
\end{theorem}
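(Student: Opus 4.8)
The plan is to combine the structural input from the characterization of special groups above with the cohomological bound $r(G)-d(G)\le 2$, working throughout with $\F_2[\Delta]$-modules, which are semisimple by Maschke's theorem since $\gcd(2,3)=1$. Set $G=G^+_2(K)$; by the Lemma, $d(G)=d(\Cl^+_2(K))=d(\Cl_2(K))=2$, and $G$ is valid via the $\Delta$-action. Assuming $G$ is finite, the embedding-theoretic argument extending the non-existence result for $[4,4]$ (see the discussion preceding the definition of \emph{special}) shows that $G$ is special, so by part~(3) of the characterization theorem $G=F/R$ with $F$ free pro-$2$ on two generators carrying the order-$3$ automorphism $\sigma$, and $R$ the closed normal closure of finitely many pairs $\pi_{u_1},\dots,\pi_{u_m}$ with $u_i\in X$. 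Writing $u_i=v_i^{-1}\sigma(v_i)$, one checks directly, by telescoping and using $\sigma^3=1$, that $u_i\,\sigma(u_i)\,\sigma^2(u_i)=1$ in $F$; hence the normal closure of $\pi_{u_i}=\{u_i,\sigma(u_i)\}$ already contains $\sigma^2(u_i)$, so $R$ is $\sigma$-stable, $\bar R:=R/[F,R]R^2$ is a finite $\F_2[\Delta]$-module of dimension $r(G)$ (here $R\subseteq\Phi(F)$, so $F$ is a minimal presentation), and $\bar R$ is generated as a $\Delta$-module by the images $\bar u_1,\dots,\bar u_m$.

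The next step is to pin down $\bar R$. By Maschke $\bar R\cong\F_2^{\,a}\oplus V^{\,b}$, and I claim $a=0$. Indeed $u_i\,\sigma(u_i)\,\sigma^2(u_i)=1$ gives $(1+\sigma+\sigma^2)\bar u_i=0$ in $\bar R$, and passing to the $\Delta$-coinvariants $(\bar R)_\Delta=\bar R/(\sigma-1)\bar R$ — on which $\sigma$ acts trivially, so the norm $1+\sigma+\sigma^2$ acts as multiplication by $3=1$ — forces the image of each $\bar u_i$ in $(\bar R)_\Delta$ to vanish; since the $\bar u_i$ $\Delta$-generate $\bar R$, this yields $(\bar R)_\Delta=0$, i.e.\ $a=0$. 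Hence $\bar R\cong V^{\,b}$ with $2b=\dim_{\F_2}\bar R=r(G)\le d(G)+2=4$ by the bound quoted from \cite{Ko}, so $b\le 2$.

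It remains to produce a presentation of $G$ by $b$ pairs. Since $\mathrm{End}_{\F_2[\Delta]}(V)\cong\F_4$, the module $V^{\,b}$ needs exactly $b$ generators over $\F_2[\Delta]$, so choose $\bar w_1,\dots,\bar w_b$ that $\Delta$-generate $\bar R$. As $\sigma-1$ is invertible on $V^{\,b}$, write $\bar w_i=(\sigma-1)\bar t_i$, lift $\bar t_i$ to $t_i\in R\subseteq\Phi(F)$, and set $w_i:=t_i^{-1}\sigma(t_i)\in X$; its image in $\bar R$ is again $\bar w_i$. Because $\{v,\sigma v\}$ is an $\F_2$-basis of $V$ for every nonzero $v\in V$, the set $\{\bar w_i,\sigma\bar w_i:1\le i\le b\}$ spans $\bar R$ over $\F_2$, so by the standard Frattini-type criterion for normal generators in pro-$2$ groups, $\{w_i,\sigma(w_i):1\le i\le b\}$ normally generates $R$. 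Therefore $G$ is a finite quotient of $F$ by the $b\le 2$ pairs $\pi_{w_1},\dots,\pi_{w_b}$, i.e.\ $G$ is $2$-special.

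I expect the main obstacle to be the vanishing $a=0$: a priori the relation module could acquire a trivial $\Delta$-summand, costing an extra (forbidden) relation, and it is precisely there that the special/pair structure — equivalently Property~E, which is also where finiteness enters, through the embedding theory and the characterization theorem — together with characteristic $2$ are used, via the observation that the built-in relation $u\,\sigma(u)\,\sigma^2(u)=1$ is the norm relation and the norm is a unit on coinvariants modulo $2$. The remaining ingredients are routine: semisimple $\F_2[\Delta]$-module bookkeeping, the identification $r(G)=\dim_{\F_2}R/[F,R]R^2$ for a minimal presentation, and the correspondence between topological normal generators of $R$ and generators of $R/[F,R]R^2$.
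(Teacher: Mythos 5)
Your argument is correct and follows the same route as the paper: it combines the Koch bound $r(G)-d(G)\le 2$ with the pair-presentation characterization of special groups (Theorem 4.2(3), i.e.\ Property~E), which is exactly what the paper does when it invokes Proposition~3.21 of \cite{LWZ}. The module-theoretic details you supply --- the norm relation $u\,\sigma(u)\,\sigma^2(u)=1$ forcing the relation module $R/[F,R]R^2$ to be $V$-isotypic, hence of even dimension $2b\le 4$, and the re-choice of $b$ normal generators of the form $t^{-1}\sigma(t)$ --- are precisely the content of that cited proposition, so your write-up is a faithful expansion of the paper's one-line proof rather than a different method.
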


\begin{proof}
By the above, $r(G^+_2(K)) \leq 4$ and now proceed as in Proposition 3.21 of \cite{LWZ}.
\end{proof}

Of the $6$ special groups above, all but $\la 512,844 \ra$ are $2$-special ($\la 512,844 \ra$ has relation rank $6$). Working systematically, we discover that there are exactly $9$ $2$-special groups of $2$-class at most 4. With high confidence extensive searches indicate that there are $28$ $2$-special groups of $2$-class at most 5 and at least $800$ of $2$-class at most $6$. To do this we took $2$ pairs (as in Theorem 4.2 (3)) at random from the Frattini subgroup of $Q_7(F)$ $100$ million times and saved any groups this yielded that had $2$-class $6$ or less. After the first million or so, no new groups of $2$-class $5$ or less or of order $2^{20}$ or less appeared.

Table~\ref{2-special_data} indicates for each small $n$ how many $2$-special groups there are of order $2^n$ and $2$-class at most $6$. (The smallest $2$-special groups of $2$-class $7$ found so far have order $2^{17}$. With high confidence our table is complete for $n \leq 20$.) The ones for $n \leq 9$ were identified above. The two for $n=10$ are the children of $\la 256,36 \ra$ referenced in the IPAD table. 

In computing lists of $2$-special groups, we observed they are mostly childless (for instance, out of the $800$ in our list, all but $19$ are childless). More specifically, using the notion of nuclear rank defined in~\cite{O}, we conjecture the following (note that childless groups are precisely those with nuclear rank $0$):

\begin{conjecture} Every $2$-special group has nuclear rank $0$ or $2$. Its valid children are always $2$-special.
\end{conjecture}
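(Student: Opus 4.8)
The plan is to run everything through O'Brien's $2$-group generation machinery. For a finite $2$-group $G$ of $2$-class $c$, write $G=F/R$ with $F$ free pro-$2$ of rank $d(G)=2$, set $R^{*}=R^{2}[R,F]$, and use the $2$-covering group $G^{*}=F/R^{*}$, its $2$-multiplicator $M=R/R^{*}$ and its nucleus $N=P_{c}(G^{*})$. We use the standard facts that $\dim_{\mathbb F_2}M=r(G)$, that $M$ is central elementary abelian in $G^{*}$ with $N\le M$, that the nuclear rank equals $d(N)$, and that $G$ is childless exactly when $N=1$. The first step is to upgrade the $\Delta$-structure: since $G$ is valid it carries an automorphism $\sigma$ of order $3$, and because $\gcd(3,2)=1$ and $M$ is central in $G^{*}$, any lift of $\sigma$ to $G^{*}$ again has order $3$; hence $G^{*}$ is valid and $M$, $N$ are $\mathbb F_2[\Delta]$-modules.

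Next reduce the first assertion. By Maschke, $M\cong(\mathbb F_2)^{a}\oplus V^{b}$, and the key point is that $a=0$: a trivial $\Delta$-summand of $M$ gives a $\Delta$-stable subgroup of index $2$, hence a valid central extension of $G$ by $\Z/2$ lying inside the Frattini subgroup (so still $2$-generated), contradicting Theorem~4.2(1). Thus $M\cong V^{b}$, and $2b=r(G)\le 4$ since $G$ is $2$-special (two pairs $\pi_{u}$ give at most four relators), so $b\in\{1,2\}$ for a finite group. (As a sanity check this recovers the empirical fact that special groups have even relation rank.) Since $\mathrm{End}_{\mathbb F_2[\Delta]}(V)\cong\mathbb F_4$, the module $V^{b}$ is an $\mathbb F_4$-space of dimension $b\le 2$, so its only submodules are the $V^{j}$ with $j\le b$; as $N\le M$ we get $N\cong V^{j}$ and nuclear rank $2j\in\{0,2,4\}$, equal to $0$ precisely when $G$ is childless.

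What remains for the first assertion is to exclude nuclear rank $4$, which forces $b=2$, $r(G)=4$ and $N=M$, i.e. $P_{c}(F)R^{*}=R$: every defining relation of $G$ lies, modulo $R^{*}$, in $P_{c}(F)$ (the first term of the $2$-central series of $F$ contained in $R$). I expect this to be the main obstacle, and would attack it along one of three lines. (i) Show that in this case $G^{*}$ is a valid $2$-group with $r(G^{*})>4$, thereby reducing ``nuclear rank $4$'' to a failure of the second assertion at $G^{*}$. (ii) Unwind presentation~(3) with the explicit $\sigma$ ($\sigma(x)=y^{-1}$, $\sigma(y)=xy^{-1}$): writing $G=F/\langle\langle\pi_{u_{1}},\pi_{u_{2}}\rangle\rangle$, analyse where $u_{1},u_{2}$ can sit in the $2$-central filtration of $\Phi(F)$ and argue that ``both pairs concentrated at level $c$'' either makes $G$ infinite or produces a valid $\Z/2$-extension after all. (iii) Failing a uniform argument, induct down each of the finitely many $2$-special subtrees whose roots lie among the small groups already enumerated by the computer search, excluding nuclear rank $4$ by hand at each root. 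In all three cases the real difficulty is uniform control of the relation rank under a single descent step.

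For the second assertion, by the above a capable $2$-special $G$ has $N\cong V$, sitting either as all of $M\cong V$ or as an $\mathbb F_4$-line in $M\cong V^{2}$, and a valid child is $H=G^{*}/T$ for a $\Delta$-submodule $T\subsetneq M$ with $N\not\le T$, so the central kernel $M/T$ of $H\to G$ is $\cong V$ or $V^{2}$. By Theorem~4.2(3) it suffices to show that $H$ is special and $r(H)\le 4$, and I would derive both from the Hochschild--Serre spectral sequence of the central extension $1\to M/T\to H\to G\to 1$, made $\Delta$-equivariant and then simplified by Maschke: since $M/T\subseteq\Phi(H)$ the five-term sequence $0\to\Hom(M/T,\mathbb F_2)\to H^{2}(G,\mathbb F_2)\to H^{2}(H,\mathbb F_2)$ gives the easy bound $r(H)\ge r(G)-\dim(M/T)$ for free, while bounding $r(H)=\dim H^{2}(H,\mathbb F_2)$ from above reduces to controlling the transgressions into $H^{3}(G,\mathbb F_2)$ — equivalently, to showing that normally generating the preimage of $T$ in $F$ needs no third pair $\pi_{v}$ beyond the two defining $G$, and that $M(H)$ inherits from $M(G)$ the absence of a trivial $\Delta$-summand. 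This last bookkeeping is again the crux; I expect it to yield either to the filtration analysis of route (ii) or, in the last resort, to the finite induction of route (iii). The single point on which the whole conjecture turns is thus the uniform control of the relation rank under one step down a $2$-special subtree.
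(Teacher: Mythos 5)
The statement you are trying to prove is Conjecture~4.6 of the paper: the authors offer no proof, only the computational evidence that of the $800$ $2$-special groups found by random search all but $19$ are childless and the survivors have nuclear rank $2$ with $2$-special children. So there is no argument in the paper to compare yours against, and what you have written is, by your own account, a plan rather than a proof. The first half of your plan does contain a correct and worthwhile partial result: granting the routine checks (a lift of $\sigma$ to the $2$-covering group $G^{*}$ of order $3$ exists by Schur--Zassenhaus since the kernel of restriction is a $2$-group, and $M=R/R^{*}\le\Phi(G^{*})$ because $d(G)=d(F)=2$), Maschke plus Theorem~4.2(1) does force the $2$-multiplicator of a special group to be $V^{b}$ with no trivial summand, whence $r(G)=2b$ is even, $b\le 2$ for a $2$-special group, and the nucleus, being an $\F_4$-subspace of $V^{b}$, gives nuclear rank in $\{0,2,4\}$. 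That already explains structurally why the observed nuclear ranks are even, which the paper does not do.

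The two claims that actually constitute the conjecture, however, are exactly the ones you leave open: excluding nuclear rank $4$, and showing that valid children of a capable $2$-special group are again $2$-special. Your routes (i) and (ii) are programs, not arguments, and route (iii) cannot close the conjecture even in principle: there are $2$-special groups of every $2$-class (the paper's Section~8 is predicated on infinite ones existing), and the list of $800$ is itself only conjecturally complete, so a finite hand-check of roots of subtrees proves nothing about the general case. Moreover, the paper's own example $\la 512,844\ra$ --- special with $r=6$, hence $M\cong V^{3}$, with three valid children of which two are not even special --- shows that the second assertion is genuinely false for special groups with $b=3$. Any correct proof must therefore use the bound $r(G)\le 4$ in an essential, quantitative way; your proposed five-term-sequence bookkeeping (``$M(H)$ inherits from $M(G)$ the absence of a trivial $\Delta$-summand'') is precisely the step that this example defeats when $b=3$, and as described it does not visibly invoke $b\le 2$ at all. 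Until you can articulate where $b\le 2$ enters the descent step, the heart of the conjecture remains untouched.
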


Interestingly the special, but not $2$-special, group $\la 512,844 \ra$ has $3$ valid children, one of which is $2$-special but the other two of which are not special.

The $800$ $2$-special $2$-groups found will be ordered by the compact presentation of their standard presentation~\cite{O2}. We will use $N_k$ to refer to the $k$th group in this list. For $k = 1, 2, ..., 5$, these are respectively $\la 8,4 \ra, \la 64,19 \ra, \la 256,36 \ra, \la 512, 858 \ra, \la 512,451 \ra$. $N_6$ and $N_7$ are the two valid children of $\la 256,36 \ra$, of order $1024$. $N_8$ is the $2$-special child of $\la 512,844 \ra$, so of $2$-class $5$, of order $2048$, whereas $N_9$ and $N_{10}$ are children of $\la 256,38 \ra$ and $\la 256,2 \ra$ respectively, so of $2$-class $4$, of order $2048$. This explicitly describes all $2$-special groups of order at most $2048$. 

{\footnotesize
\begin{table}[htp]
\caption{Number of 2-special groups by order and $2$-class.}
\label{2-special_data}
\begin{center}
\begin{tabular}{ |c|c|c|c|c|c|c| } 
 \hline
 $\log_2{(\text{Order})}$ & 1 & 2 & 3 & 4 & 5 & 6 \\
 \hline
3 & 0 & 1 & 0 & 0 & 0 & 0 \\
\hline
6 & 0 & 0 & 1 & 0 & 0 & 0 \\
\hline
8 & 0 & 0 & 1 & 0 & 0 & 0 \\
\hline
9 & 0 & 0 & 0 & 2 & 0 & 0 \\
\hline
10 & 0 & 0 & 0 & 2 & 0 & 0 \\
\hline
11 & 0 & 0 & 0 & 2 & 1 & 0 \\
\hline
12 & 0 & 0 & 0 & 0 & 4 & 2 \\
\hline
13 & 0 & 0 & 0 & 0 & 4 & 0 \\
\hline
14 & 0 & 0 & 0 & 0 & 4 & 0 \\
\hline
15 & 0 & 0 & 0 & 0 & 0 & 10 \\
\hline
16 & 0 & 0 & 0 & 0 & 0 & 7 \\
\hline
17 & 0 & 0 & 0 & 0 & 6 & 28 \\
\hline
19 & 0 & 0 & 0 & 0 & 0 & 72 \\
\hline
20 & 0 & 0 & 0 & 0 & 0 & 30 \\
 \hline
\end{tabular}
\end{center}
\end{table}
}

Having seen that our narrow tower groups are $2$-special, the following conjecture suggests itself. This takes care of part (a) of the Cohen-Lenstra methodology, namely identifying the right class of groups on which to place a distribution. In the next section we do likewise for wide tower groups.

\begin{conjecture}  Every finite $2$-special $2$-group arises as the narrow tower group of some cyclic cubic field. 
\end{conjecture}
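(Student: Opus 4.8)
The plan is to treat the conjecture as an inverse Galois problem with prescribed ramification, using the intrinsic description of $2$-special groups in Theorem~4.3(3) as the bridge between group theory and arithmetic. Fix a finite $2$-special group $G$ and write $G = F/R$, where $F$ is free pro-$2$ on $x,y$, the automorphism $\sigma$ is the one in Theorem~4.3, and $R$ is the closed normal subgroup generated by finitely many $\sigma$-orbits $\pi_{u_1},\dots,\pi_{u_k}$ with $u_i \in X = \{v^{-1}\sigma(v) : v \in \Phi(F)\}$. Put $\widetilde{G} = G \rtimes \Delta$; by Theorem~4.3(2) its Schur multiplier is trivial. The goal is to produce a cyclic cubic field $K$ together with a Galois extension $\widetilde{K}/\Q$ such that $\Gal(\widetilde{K}/\Q) \cong \widetilde{G}$, the fixed field of $G$ inside $\widetilde{K}$ is $K$, the extension $\widetilde{K}/K$ is unramified at all finite primes, and --- crucially --- $\widetilde{K}$ is the \emph{maximal} $2$-extension of $K$ unramified at all finite primes, so that $G^+_2(K) \cong G$.

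First I would prove realizability by climbing the $2$-class tower of $K$ one step at a time. Suppose inductively that, for $K$ ranging over a suitable infinite family of cyclic cubic fields parametrized by conductor, one has an extension $L_n/K$ that is Galois over $\Q$, unramified over $K$ at all finite primes, and realizes $Q_n(G) \rtimes \Delta$ over $\Q$ (so that $\Gal(L_n/K) \cong Q_n(G)$). Passing from $L_n$ to a realization of $Q_{n+1}(G)$ is a Galois embedding problem with kernel $P_n(G)/P_{n+1}(G)$, to be solved compatibly with the $\Delta$-action; the condition $u_i \in \{v^{-1}\sigma(v)\}$ that defines a $2$-special group is precisely what guarantees, through the embedding theory of Fr\"ohlich~\cite{Fr} already used in the proof of Theorem~4.1, that a solution exists which remains unramified over $K$ at all finite primes and has the prescribed behaviour at $\infty$, keeping us on the narrow side. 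Triviality of $M(G \rtimes \Delta)$ removes the global obstruction to continuing all the way up, and one expects enough local freedom at the primes ramifying in $K/\Q$ --- which govern $\Cl^+_2(K)$ through the ambiguous class number formula, cf.~\cite{MA} --- to solve each stage and to do so compatibly as $n$ grows. Since $G$ is finite the process involves only finitely many embedding problems, producing $K$ with $G^+_2(K)$ \emph{surjecting onto} $G$.

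The main obstacle --- and the reason this remains a conjecture --- is maximality: forcing $G^+_2(K)$ to equal $G$ rather than a proper valid descendant. Cohomology already narrows the candidates, since $r(G^+_2(K)) - d(G^+_2(K)) \le 2$ by the bound from~\cite{Ko} used in the proof of Theorem~4.4, and Conjecture~4.5 predicts that the valid descendants of a $2$-special group which could still occur are themselves $2$-special with nuclear rank $0$ or $2$, so the ``danger set'' of larger groups is small and explicitly enumerable. To finish one would need a density statement: among the cyclic cubic fields in the chosen family whose narrow IPAD --- or a longer abelian invariant --- matches that of $G$, the subset for which $G^+_2(K)$ properly extends $G$ has density strictly less than $1$; one expects each possible continuation of the tower to be governed by Chebotarev conditions at the ramified primes, happening with positive probability but not probability $1$. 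Making this rigorous is precisely the open problem; it is the same obstruction met in the imaginary and real quadratic cases of~\cite{BBH,BBH2}, here compounded by the fact that $\Q$ contains a primitive $2$nd root of unity, which is responsible both for the extra slack in the relation rank and for the unexpected groups in the first place. A realistic near-term target is the weaker statement that for every finite $2$-special $G$ and every $n$ there are infinitely many cyclic cubic $K$ with $Q_n(G^+_2(K)) \cong Q_n(G)$, which should follow from the framework of Liu--Wood--Zureick-Brown~\cite{LWZ} together with a genus-theoretic construction of cyclic cubic fields of prescribed $2$-class group; the full conjecture then requires the additional, and currently missing, input that these approximations can be taken to terminate at $G$.
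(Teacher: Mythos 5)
The statement you were given is a conjecture, and the paper does not prove it: its only support is empirical, namely the IPAD census of Section~7, which exhibits explicit fields realizing $N_1,\dots,N_5$ and (via the $16$th, $19$th and $20$th rows of Table~3) a handful of further small $2$-special groups. Your text is likewise not a proof but a strategy outline, and you say so yourself; so the honest verdict is that there is a genuine gap --- indeed two --- and to your credit you have located both of them. The first is realizability: the inductive climb through embedding problems with kernels $P_n(G)/P_{n+1}(G)$ is only sketched. Triviality of $M(G\rtimes\Delta)$ from Theorem~4.3(2) removes one obstruction, but it does not by itself produce, for a given cyclic cubic $K$, a solution of each embedding problem that is unramified at all finite primes and compatible with the $\Delta$-action; the phrase ``one expects enough local freedom at the primes ramifying in $K/\Q$'' is exactly where a theorem would have to go. Even the base case --- producing infinitely many cyclic cubic $K$ whose narrow $2$-class group matches $G^{\ab}$ as a $\Delta$-module --- rests on Malle-type density statements that are themselves conjectural (Section~6). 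The second gap, maximality, you flag explicitly as open, and it genuinely is: nothing in the paper or in \cite{LWZ} gives a positive-density criterion forcing $G_2^+(K)$ to terminate at $G$ rather than at a proper $2$-special descendant, and the paper's own data show that the support of the joint distribution is subtler than one might guess (some viable quotients of a given $2$-special group apparently never occur as the paired wide group).

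That said, your framing is a reasonable theoretical counterpart to what the paper actually does. The paper's route is observational: it computes narrow and wide IPADs for $500{,}000$ fields, uses Table~\ref{IPAD_data} to pin down the group from its IPAD where possible, and thereby confirms the conjecture only for the smallest groups in the list. Your route, were the two gaps closed, would handle all finite $2$-special groups uniformly; but as it stands neither the paper nor your proposal constitutes a proof, and your text should be presented as a discussion of the conjecture and of the obstacles to proving it, not as a proof.
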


In Section 7 we investigate this conjecture by using IPADs. 

\section{The class of wide tower groups}

Recall that $G_2(K)$ is the quotient of $G^+_2(K)$ by complex conjugation, where we have to quotient by the normal subgroup generated by it and its images under the $\Delta$-action. We therefore introduce the following terminology.

\begin{definition} If $G$ is a valid $2$-group and $z \in G$ of order $1$ or $2$, then the quotient of $G$ by the normal $\Delta$-invariant subgroup generated by $z$ will be called {\em viable}.

A viable quotient of a special group will be called {\em select}. A viable quotient of a $k$-special group is called $k$-select.
\end{definition}

By Theorem 4.5, the wide tower group of a cyclic cubic field is $2$-select.

By computing all the viable quotients of the groups in our list of $2$-special groups, we find that $2$-select groups are also relatively rare. The $2$-select groups of order at most $512$ (ordered by compact presentations) are precisely the $14$ groups
$\la 4,2 \ra, \la 8,4 \ra, \la 32,2 \ra, \la 64,19 \ra, \la 128,40 \ra$, $\la 256,305 \ra, \la 256,306 \ra, \la 256,2 \ra, \la 256,38 \ra, 
\la 256,35 \ra, \la 256,36 \ra, \la 512,844 \ra, \la 512,858 \ra, \la 512,451 \ra$. 

Extending the reasoning of Theorem 4.2, there is a nice characterization of select groups along the lines of Theorem 4.2 (2).

\begin{theorem} A valid group $G$ is select if and only if the Schur multiplier $M(G \rtimes \Delta)$ is of order $1$ or $2$.
\end{theorem}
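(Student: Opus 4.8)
The plan is to transport both conditions to the semidirect product $\Gamma := G \rtimes \Delta$ and to exploit that $|\Delta| = 3$ is prime to $2$: this forces $\Gamma^{\ab}$ to be a $3$-group, makes every finite $\Z_2[\Delta]$-module split off its $\Delta$-invariants, and degenerates the spectral sequences in play. Throughout I work with valid $G$ satisfying $d(G) = 2$ (a cyclic $2$-group admits no order-$3$ automorphism, and select or special groups have $d \le 2$, so this is the substantive case). Since $\Delta$ acts non-trivially on $G/\Phi(G)$, that module is $V$, hence $(G^{\ab})^\Delta = (G^{\ab})_\Delta = 0$ and $\Gamma^{\ab} \cong \Z/3$; the same applies to any special group. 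Recall that ``$G$ is select'' means precisely that $G \cong S/N$ with $S$ special and $N$ the normal closure, inside $\Gamma_S := S \rtimes \Delta$, of some $z \in S$ with $z^2 = 1$, so that $\Gamma = \Gamma_S / N$.

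For the direction $\Rightarrow$, given such $S$ and $z$ I would feed $1 \to N \to \Gamma_S \to \Gamma \to 1$ into the five-term exact sequence in integral homology. Because $S$ is special, $M(\Gamma_S) = H_2(\Gamma_S, \Z) = 0$ (Theorem~4.2), and $\Gamma_S^{\ab} \to \Gamma^{\ab}$ is an isomorphism (both are $\Z/3$), so the sequence collapses to $M(\Gamma) \cong N/[\Gamma_S, N]$. As $N$ is the normal closure of a single element, $N/[\Gamma_S, N]$ is cyclic, generated by the image of $z$; since $z^2 = 1$ this has order at most $2$, so $|M(G \rtimes \Delta)| \le 2$.

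For the converse, if $M(\Gamma) = 1$ then $G$ is special, hence select (the viable quotient of itself by the identity). Assume $M(\Gamma) \cong \Z/2$ and let $\widetilde\Gamma$ be a Schur cover of $\Gamma$, i.e.\ a stem extension $1 \to A \to \widetilde\Gamma \to \Gamma \to 1$ with $A \cong M(\Gamma) \cong \Z/2$ central and contained in $[\widetilde\Gamma, \widetilde\Gamma]$. Then $\widetilde\Gamma^{\ab} \cong \Gamma^{\ab} \cong \Z/3$, so $[\widetilde\Gamma, \widetilde\Gamma]$ is the (normal Sylow-$2$) preimage $S$ of $G$; lifting a Sylow $3$-subgroup we may write $\widetilde\Gamma = S \rtimes \Delta$, which makes $S$ a $\Delta$-equivariant central $\Z/2$-extension of $G$ with $G = S/A$. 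Writing $A = \la z \ra$: since $z$ is central in $S$ and $\Delta$-fixed, $\la z \ra^\Delta = \la z \ra$ and $G = S / \la z \ra^\Delta$, so it remains only to check that $S$ is valid and special. For validity, $S/\Phi(S) \twoheadrightarrow G/\Phi(G) = V$ shows $d(S) \in \{2, 3\}$; if $d(S) = 3$ its kernel is a $\Delta$-fixed line in $S/\Phi(S)$, so $(S^{\ab})^\Delta \ne 0$, whereas $S = [\widetilde\Gamma, \widetilde\Gamma] = [S,S]\,[S,\Delta]$ forces $S^{\ab} = (\sigma - 1)S^{\ab}$, and since $\sigma - 1$ annihilates $(S^{\ab})^\Delta$ but is invertible on the complementary isotypic summand, $(S^{\ab})^\Delta = 0$ — a contradiction; thus $d(S) = 2$ and $S$ is valid. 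For speciality, by Theorem~4.2 it suffices that $M(\widetilde\Gamma) = H_2(\widetilde\Gamma, \Z) = 0$: in the Lyndon--Hochschild--Serre spectral sequence of $1 \to A \to \widetilde\Gamma \to \Gamma \to 1$ the rows $q \ge 2$ vanish because $H_2(\Z/2, \Z) = 0$, the entry $E^2_{1,1} = H_1(\Gamma, \Z/2) = \Gamma^{\ab} \otimes \Z/2 = 0$, and $d_2 \colon E^2_{2,0} = H_2(\Gamma) \to E^2_{0,1} = A$ is the transgression onto the kernel, which for a Schur cover is a surjection between finite groups of the same order, hence an isomorphism; so $H_2(\widetilde\Gamma, \Z) = 0$. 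Therefore $S$ is special, and $G$, being a viable quotient of it, is select.

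The main obstacle is this converse — in particular, verifying that a Schur cover of $G \rtimes \Delta$, dissected as $S \rtimes \Delta$, is \emph{simultaneously} valid (so that $S$ is honestly $2$-generated, not accidentally $3$-generated) and special (so that $M(S \rtimes \Delta)$ vanishes). Both points rely on the coprimality of $3$ with $2$ — which makes $\Gamma^{\ab}$ a $3$-group, collapses the spectral sequence, and splits the $\Z_2[\Delta]$-modules at hand — together with the classical fact that the transgression onto the kernel of a Schur cover is an isomorphism.
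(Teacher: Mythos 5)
Your proof is correct, and it is worth noting that the paper offers no argument at all for this statement --- it only remarks that the characterization ``extends the reasoning of Theorem 4.2,'' whose own proof is a citation to Proposition 3.12 of \cite{LWZ} --- so your self-contained homological argument supplies what the paper leaves implicit. Both halves check out. For the forward direction, the five-term sequence for $1 \to N \to S\rtimes\Delta \to G\rtimes\Delta \to 1$, together with $M(S\rtimes\Delta)=0$ and the fact that $(S\rtimes\Delta)^{\ab}\cong \Z/3 \cong (G\rtimes\Delta)^{\ab}$ (which holds because an order-$3$ automorphism of a $2$-group must act nontrivially on the Frattini quotient, forcing it to be $V$ and the coinvariants of the abelianization to vanish), gives $M(G\rtimes\Delta)\cong N/[S\rtimes\Delta,N]$, cyclic generated by the image of $z$, hence of order at most $2$. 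For the converse, your dissection of a Schur cover $\widetilde\Gamma$ of $G\rtimes\Delta$ as $S\rtimes\Delta$ with $S=[\widetilde\Gamma,\widetilde\Gamma]$ the normal Sylow $2$-subgroup, the verification that $d(S)=2$ via $(S^{\ab})^{\Delta}=0$, and the collapse of the spectral sequence using that the transgression of a stem extension surjects onto the kernel, are exactly the needed content; reassuringly, the construction recovers $Q_8\subset \mathrm{SL}_2(\F_3)$ as the cover of $A_4=[2,2]\rtimes\Delta$ witnessing that $\la 4,2\ra$ is select, consistent with the paper's list. Two cosmetic points: the rows $q\ge 2$ of the spectral sequence do not all vanish ($H_3(\Z/2,\Z)\cong\Z/2$), but only $E^2_{0,2}=H_2(\Z/2,\Z)=0$ is relevant for $H_2(\widetilde\Gamma)$, so nothing is affected; and your argument, like Theorem 4.2 itself, is really about finite $G$, which matches the paper's evident intent even though the statement omits the word. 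What your route buys beyond the paper's implicit one is an explicit recipe: the special group covering a given select $G$ is realized concretely as the derived subgroup of a Schur cover of $G\rtimes\Delta$.
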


One unusual observation is that all the pairs of $2$-special and $2$-select groups obtained by taking all viable quotients of the $800$ groups in our list of $2$-special groups have elementary abelian kernel, except for one case, namely the quotient $\la 32,2 \ra$ of $N_{12}$.

We can now carry out part (a) of the Cohen-Lenstra methodology for wide tower groups.

\begin{conjecture}  Every finite $2$-select $2$-group arises as the wide tower group of some cyclic cubic field. 
\end{conjecture}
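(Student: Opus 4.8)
The natural plan is to deduce this from the preceding conjecture, that every finite $2$-special $2$-group occurs as a narrow tower group. By definition a $2$-select group $G$ is a viable quotient of some $2$-special group $S$, so $G\cong S/N$ where $N=\langle\langle z\rangle\rangle$ denotes the normal $\Delta$-invariant subgroup of $S$ generated by an element $z$ of order at most $2$; one notes that $N\subseteq\Phi(S)$ whenever $G\neq 1$, since then $d(G)=d(S)=2$. Granting the narrow conjecture for $S$, fix a cyclic cubic field $K_0$ with $G^+_2(K_0)\cong S$, and let $c\in G^+_2(K_0)$ be a decomposition (complex conjugation) element at one of the three real places of $K_0$; it has order at most $2$ and, as $K_0/\Q$ is Galois, is determined up to $\Delta$-conjugacy. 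Because $G_2(K_0)=G^+_2(K_0)/\langle\langle c\rangle\rangle$, we would be done as soon as $K_0$ is chosen so that $\langle\langle c\rangle\rangle$ matches $N$ under the isomorphism $G^+_2(K_0)\cong S$. Thus the entire problem reduces to controlling where complex conjugation sits inside the narrow tower group.

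The position of complex conjugation is purely archimedean data: the splitting type of the narrow extension at the real places of $K_0$, equivalently information about the signatures of units. Since a cyclic cubic field is totally real there is no way to ``twist at infinity,'' so one genuinely needs an equidistribution statement: as $K$ ranges over cyclic cubic fields with $G^+_2(K)\cong S$, the $\Delta$-conjugacy class of $c$ should realize every normal $\Delta$-invariant subgroup $N\trianglelefteq S$ that is the normal closure of an involution and has $S/N\cong G$, up to the automorphisms of $S$ commuting with the action of $\Delta$. The characterization that a valid group is select precisely when $M(G\rtimes\Delta)$ has order at most $2$ is what makes the target tractable: beyond the case $\langle\langle c\rangle\rangle=1$, selectness leaves room for essentially a single extra $\Z/2$, so there are very few subgroups $N$ to hit. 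The concrete plan would be to run whatever density input underlies the narrow conjecture --- in the style of the framework of Liu, Wood, and Zureick-Brown~\cite{LWZ}, once it is available for $p=2$ --- while carrying along the decomposition behavior at the infinite places, and then to prove that this archimedean datum varies independently of the finite structure carried by $G^+_2(K)$.

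The existence half is where I expect the real obstacle to lie. For $p=2$ there is at present no theorem guaranteeing that the relevant pro-$2$ groups are realized as tower groups with positive density; indeed the ``as yet unknown subtleties as to the support'' of the joint distribution mentioned in the introduction are exactly the question of which pairs $(G^+_2(K),G_2(K))$ actually occur, and there is likewise no available statement that complex conjugation distributes independently of the finite part of $G^+_2(K)$. A complete proof along these lines would therefore be contingent both on the narrow conjecture and on such an equidistribution input; in their absence, the numerical evidence assembled in Section~7 via IPADs is what we offer in support of the conjecture.
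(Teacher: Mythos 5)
This statement is a conjecture, and the paper offers no proof of it: its only support is the numerical evidence of Section~7, where specific rows of Table~\ref{explanation} and Table~\ref{table-census-abs} exhibit, for each of the first five $2$-select groups and for $\la 256,38 \ra$, $\la 512,451 \ra$, $\la 512,858 \ra$, cyclic cubic fields whose narrow/wide IPAD pair forces $G_2(K)$ to be that group. Your closing paragraph correctly lands on this, so your ultimate assessment of what can actually be offered matches the paper.

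However, the reduction you propose in the middle --- realize the $2$-special group $S$ as some $G^+_2(K_0)$ and then invoke an equidistribution statement saying that the $\Delta$-conjugacy class of complex conjugation realizes \emph{every} normal $\Delta$-invariant subgroup $N$ generated by an involution --- is contradicted by the paper's own data, not merely unavailable. Section~5 records the ``fascinating observation'' that there are many pairs $(N,W)$ with $W$ a viable quotient of the $2$-special group $N$ for which empirically no field has $G^+_2(K)\cong N$ and $G_2(K)\cong W$; Conjecture~7.1 goes further and asserts that for $N_2,\dots,N_7$ only one specific viable quotient ever occurs as the wide group (e.g.\ $N_2=\la 64,19\ra$ has both $\la 64,19\ra$ and $\la 32,2\ra$ as viable quotients, yet the wide group is conjecturally always $\la 8,4\ra$). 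So the archimedean datum does \emph{not} vary independently of the finite structure of $G^+_2(K)$, and the independence input your plan is contingent on is false as stated. This does not threaten the conjecture itself --- one only needs, for each $2$-select $G$, a single $2$-special $S$ and a single field realizing the pair, and different $G$ may require different choices of $S$ --- but it means a proof along your lines would need a much finer understanding of which pairs $(N,W)$ are in the support of the joint distribution, which the paper explicitly identifies as an open problem.
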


We give evidence for this conjecture in Section 7. One fascinating observation is that there do exist, however, many pairs consisting of a $2$-special group $N$ and a $2$-select group $W$ where $W$ is a viable quotient of the $N$ for which (empirically) no cyclic cubic field $K$ exists with $G^+_2(K) \cong N$ and $G_2(K) \cong W$. 

\section{Previous work on frequencies}

In this section we describe earlier work conjecturing how often the $2$-class group of a cyclic cubic field is isomorphic to a given group. There appears to be no previous work on how often the narrow $2$-class group is isomorphic to a given group.

Let $V$ denote the abstract irreducible $\F_2[\Z/3]$-module of dimension $2$ (in other words, $[2,2]$ with an automorphism of order $3$ cyclically permuting its nontrivial elements). As noted earlier, as a $\Delta$-module, $Cl_2(K)/Cl_2(K)^2$ is isomorphic to $V^d$ for some non-negative integer $d$. We set $g = 2d$. 

Malle~\cite{M08} conjectured that $d = 1$ occurs for about $14.2\%$ of cyclic cubic fields. He also conjectured that among such cases $Cl_2(K) \cong [2^n,2^n]$ with frequency $15/16^n$.
We will focus on the case $d = 1$ (and so $g = 2$) and suppose $G_2(K)^{ab} \cong [2^n,2^n]$. 

\subsection{$n=1$}

Of the $3$ valid children of $[2,2]$, only $\la 8,4 \ra$ has abelianization $[2,2]$ and it has no descendants. It follows that in the case $n=1$, $G_2(K)$ is either $[2,2]$ or $\la 8,4 \ra$. It is natural to ask how often each arises. Rubinstein-Salzedo~\cite{RS} considered this question for $K$ ramified at a single prime and found that, of $377529$ such fields, $53.7\%$ have $G_2(K) \cong [2,2]$ whereas $46.3\%$ have $G_2(K) \cong \la 8,4 \ra$. We shall return to and give some reasoning for this disparity later.

\subsection{General $n$}

In \cite{M10}, Malle conjectured that a given $2$-torsion $\mathfrak{O}$-module $H$ of even rank $2d$ should arise as $Cl_2(K)$ with frequency
$$ 2 ((2)_\infty (16)_\infty/(4)_\infty^2) 2^{d^2} (4)_{d+1}/(|H| \text{Aut}_{\mathfrak{O}}(H)) $$
where $\mathfrak{O} = \Z[\Delta]$ and
$(q)_r = \prod_{i=1}^r (1-q^{-i})$. For $d=1$, this simplifies to $1.5995/(|H| \text{Aut}_{\mathfrak{O}}(H)).$

\section{Data}

In this section, we present some data regarding the distribution of narrow and wide IPADs and discuss various conjectures. The data were obtained by starting from Malle's tables of cyclic cubic fields~\cite{Malle}. Ordering by discriminant, the first 500,000 fields $K$ with $2$-generated $2$-class group were extracted. The wide and narrow $2$-class groups of $K$ along with various extensions of degree $2$ and $4$ were computed in order to determine the wide and narrow IPADs of $K$. The distribution (both an absolute count and the relative proportions) of the narrow and wide IPAD pairs that arose can be found in Tables~\ref{table-census-abs} and~\ref{last-table}. We have divided the collection of fields into three nested sets $I_1 \subseteq I_2 \subseteq I_3$. The first contains the first 100,000 fields, the second contains the first 200,000 fields and the third contains the full set of 500,000 fields. Most of the observed proportions seem fairly stable as the set of fields is enlarged.

The computations were implemented using \pari~\cite{pari}, version 2.9.2  running on $2 \times 2.66$ GHz 6-Core Intel Xeon processors. 
The computations were run in parallel across multiple cores and took about 18,000 core hours. For some fields more than one attempt was required due to precision issues and the time spent on failed attempts is not included.

{\footnotesize
\begin{table}[htp]
\caption{Census of the most common extended IPADs.}
\label{table-census-abs}
\begin{center}

\begin{tabular}{|l|l |c|c|c|}
\hline
Narrow & Wide  & $I_1$ & $I_2$ & $I_3$\\
\hline
$[2,2]$; $[4]$; $[2]$  &  $[2,2]$; $[2]$; $[]$  & 50500 & 101172 & 251883 \\
\hline
$[2,2]$; $[4]$; $[2]$  &  $[2,2]$; $[4]$; $[2]$  & 25022 & 49829 & 124661 \\
\hline
$[4,4]$; $[2,2,4]$; $[2,2,4]$  &  $[2,2]$; $[4]$; $[2]$  & 18006 & 36225 & 91413
\\
\hline
$[4,4]$; $[2,4,4]$; $[4,4,4]$  &  $[4,4]$; $[2,2,4]$; $[2,2,2]$  & 1225 & 2411 &
6098 \\
\hline
$[4,4]$; $[2,2,8]$; $[2,2,2,2,4]$  &  $[4,4]$; $[2,2,4]$; $[2,2,2]$  & 1220 & 
2357 & 5894 \\
\hline
$[4,4]$; $[2,2,8]$; $[2,2,2,2,4]$  &  $[4,4]$; $[2,2,4]$; $[2,2,4]$  & 586 & 
1169 & 3018 \\
\hline
$[8,8]$; $[2,4,8]$; $[4,4,4]$  &  $[4,4]$; $[2,2,4]$; $[2,2,4]$  & 578 & 1180 & 
2982 \\
\hline
$[4,4]$; $[2,2,8]$; $[4,4,8]$  &  $[4,4]$; $[2,2,4]$; $[2,2,4]$  & 625 & 1212 & 
2967 \\
\hline
$[4,4]$; $[2,4,4]$; $[4,4,4]$  &  $[4,4]$; $[2,2,4]$; $[2,4,4]$  & 415 & 826 & 
2061 \\
\hline
$[4,4]$; $[2,2,8]$; $[2,2,2,2,4]$  &  $[4,4]$; $[2,2,4]$; $[2,2,2,2,2]$  & 397 &
832 & 2050 \\
\hline
$[8,8]$; $[4,4,8]$; $[2,2,4,4,8]$  &  $[4,4]$; $[2,4,4]$; $[4,4,4]$  & 196 & 395
& 1044 \\
\hline
$[4,4]$; $[2,2,8]$; $[2,2,2,2,4]$  &  $[4,4]$; $[2,2,8]$; $[2,2,2,2,4]$  & 187 &
416 & 983 \\
\hline
$[8,8]$; $[4,4,8]$; $[2,2,4,4,8]$  &  $[8,8]$; $[2,4,8]$; $[4,4,4]$  & 140 & 234
& 590 \\
\hline
$[8,8]$; $[4,4,8]$; $[2,2,4,4,8]$  &  $[4,4]$; $[2,2,8]$; $[2,2,2,2,4]$  & 116 &
232 & 570 \\
\hline
$[8,8]$; $[2,4,16]$; $[4,8,8]$  &  $[8,8]$; $[2,4,8]$; $[4,4,4]$  & 79 & 157 & 
379 \\
\hline
$[4,4]$; $[2,2,16]$; $[8,8,8]$  &  $[4,4]$; $[2,2,8]$; $[4,4,4]$  & 76 & 149 & 
372 \\
\hline
$[8,8]$; $[4,8,8]$; $[2,2,8,8,8]$  &  $[4,4]$; $[2,4,4]$; $[4,4,4]$  & 74 & 145 
& 352 \\
\hline
$[8,8]$; $[4,4,8]$; $[2,2,4,4,8]$  &  $[4,4]$; $[2,2,8]$; $[4,4,4]$  & 63 & 117 
& 340 \\
\hline
$[8,8]$; $[2,8,8]$; $[4,8,8]$  &  $[8,8]$; $[2,4,8]$; $[4,4,4]$  & 42 & 87 & 213
\\
\hline
$[8,8]$; $[4,4,8]$; $[2,2,4,4,8]$  &  $[4,4]$; $[2,2,8]$; $[4,4,8]$  & 41 & 81 &
212 \\
\hline
Other IPADs (73 types) &   & 412 & 774 & 1918 \\
\hline
\hline
Total &   & 100000 & 200000 & 500000 \\
\hline
\end{tabular}
\end{center}
\end{table}
}

{\footnotesize
\begin{table}[htp]
\caption{Relative proportions of the most common extended IPADs.}
\label{last-table}
\begin{center}

\begin{tabular}{|l|l |c|c|c|}
\hline
Narrow & Wide  & $I_1$ & $I_2$ & $I_3$\\
\hline
$[2,2]$; $[4]$; $[2]$  &  $[2,2]$; $[2]$; $[]$  & 0.5050 & 0.5059 & 0.5038 \\
\hline
$[2,2]$; $[4]$; $[2]$  &  $[2,2]$; $[4]$; $[2]$  & 0.2502 & 0.2491 & 0.2493 \\
\hline
$[4,4]$; $[2,2,4]$; $[2,2,4]$  &  $[2,2]$; $[4]$; $[2]$  & 0.1801 & 0.1811 & 
0.1828 \\
\hline
$[4,4]$; $[2,4,4]$; $[4,4,4]$  &  $[4,4]$; $[2,2,4]$; $[2,2,2]$  & 0.0123 & 
0.0121 & 0.0122 \\
\hline
$[4,4]$; $[2,2,8]$; $[2,2,2,2,4]$  &  $[4,4]$; $[2,2,4]$; $[2,2,2]$  & 0.0122 & 
0.0118 & 0.0118 \\
\hline
$[4,4]$; $[2,2,8]$; $[2,2,2,2,4]$  &  $[4,4]$; $[2,2,4]$; $[2,2,4]$  & 0.0059 & 
0.0058 & 0.0060 \\
\hline
$[8,8]$; $[2,4,8]$; $[4,4,4]$  &  $[4,4]$; $[2,2,4]$; $[2,2,4]$  & 0.0058 & 
0.0059 & 0.0060 \\
\hline
$[4,4]$; $[2,2,8]$; $[4,4,8]$  &  $[4,4]$; $[2,2,4]$; $[2,2,4]$  & 0.0063 & 
0.0061 & 0.0059 \\
\hline
$[4,4]$; $[2,4,4]$; $[4,4,4]$  &  $[4,4]$; $[2,2,4]$; $[2,4,4]$  & 0.0042 & 
0.0041 & 0.0041 \\
\hline
$[4,4]$; $[2,2,8]$; $[2,2,2,2,4]$  &  $[4,4]$; $[2,2,4]$; $[2,2,2,2,2]$  & 
0.0040 & 0.0042 & 0.0041 \\
\hline
$[8,8]$; $[4,4,8]$; $[2,2,4,4,8]$  &  $[4,4]$; $[2,4,4]$; $[4,4,4]$  & 0.0020 & 
0.0020 & 0.0021 \\
\hline
$[4,4]$; $[2,2,8]$; $[2,2,2,2,4]$  &  $[4,4]$; $[2,2,8]$; $[2,2,2,2,4]$  & 
0.0019 & 0.0021 & 0.0020 \\
\hline
$[8,8]$; $[4,4,8]$; $[2,2,4,4,8]$  &  $[8,8]$; $[2,4,8]$; $[4,4,4]$  & 0.0014 & 
0.0012 & 0.0012 \\
\hline
$[8,8]$; $[4,4,8]$; $[2,2,4,4,8]$  &  $[4,4]$; $[2,2,8]$; $[2,2,2,2,4]$  & 
0.0012 & 0.0012 & 0.0011 \\
\hline
$[8,8]$; $[2,4,16]$; $[4,8,8]$  &  $[8,8]$; $[2,4,8]$; $[4,4,4]$  & 0.0008 & 
0.0008 & 0.0008 \\
\hline
$[4,4]$; $[2,2,16]$; $[8,8,8]$  &  $[4,4]$; $[2,2,8]$; $[4,4,4]$  & 0.0008 & 
0.0007 & 0.0007 \\
\hline
$[8,8]$; $[4,8,8]$; $[2,2,8,8,8]$  &  $[4,4]$; $[2,4,4]$; $[4,4,4]$  & 0.0007 & 
0.0007 & 0.0007 \\
\hline
$[8,8]$; $[4,4,8]$; $[2,2,4,4,8]$  &  $[4,4]$; $[2,2,8]$; $[4,4,4]$  & 0.0006 & 
0.0006 & 0.0007 \\
\hline
$[8,8]$; $[2,8,8]$; $[4,8,8]$  &  $[8,8]$; $[2,4,8]$; $[4,4,4]$  & 0.0004 & 
0.0004 & 0.0004 \\
\hline
$[8,8]$; $[4,4,8]$; $[2,2,4,4,8]$  &  $[4,4]$; $[2,2,8]$; $[4,4,8]$  & 0.0004 & 
0.0004 & 0.0004 \\
\hline
Other IPADs (73 types) &  & 0.0041 & 0.0039 & 0.0038  \\
\hline
\end{tabular}
\end{center}
\end{table}
}

In Table~\ref{explanation}, for the most common pairs of narrow/wide IPADs, with the help of Table 1, we infer what the corresponding narrow tower group and wide tower group must be. 
The last column gives a guess (based on the observed frequency) for the proportion of cyclic cubic fields that have those narrow and wide IPADs.

{\footnotesize
\begin{table}[htp]
\caption{Explanation and conjectural frequency of IPAD pairs.}\label{explanation}
\begin{center}
\begin{tabular}{ |c|c|c|c|c| } 
 \hline
\text{Narrow} & \text{Wide} & \text{Explanation} & \text{Obs freq} & \text{Conj freq}  \\
 \hline
$[2,2];[4];[2]$ & $[2,2];[2];[]$ & $N_1 \rightarrow \la 4,2 \ra$ & 0.5038 & 1/2 \\
\hline
$[2,2];[4];[2]$ & $[2,2];[4];[2]$ & $N_1 \rightarrow \la 8,4 \ra$ & 0.2493 & 1/4 \\
\hline
$[4,4];[2,2,4];[2,2,4]$ & $[2,2];[4];[2]$ & $N_2 \rightarrow \la 8,4 \ra$ & 0.1828 & 3/16 \\
\hline
$[4,4];[2,4,4];[4,4,4]$ & $[4,4];[2,2,4];[2,2,2]$ & $N_3 \rightarrow \la 32,2 \ra$ & 0.0122 & 3/256 \\
\hline
$[4,4];[2,2,8];[2,2,2,2,4]$ & $[4,4];[2,2,4];[2,2,2]$ & 7.2 below & 0.0118 & 3/256 \\
\hline
$[4,4];[2,2,8];[2,2,2,2,4]$ & $[4,4];[2,2,4];[2,2,4]$ & 7.3 below & 0.0060 & 3/512 \\
\hline
$[8,8];[2,4,8];[4,4,4]$ & $[4,4];[2,2,4];[2,2,4]$ & $N_5 \rightarrow \la 64,19 \ra$ & 0.0060 & 3/512 \\
\hline
$[4,4];[2,2,8];[4,4,8]$ & $[4,4];[2,2,4];[2,2,4]$ & $N_4 \rightarrow \la 64,19 \ra$ & 0.0059 & 3/512 \\
\hline
$[4,4];[2,4,4];[4,4,4]$ & $[4,4];[2,2,4];[2,4,4]$ & $N_3   \ \text{or} \ N_6 \ \text{or} \ N_7 \rightarrow \la 128,40 \ra$ & 0.0041 & 1/256 \\
 \hline
\end{tabular}
\end{center}
\end{table}
}

Note that this proves Conjecture 4.7 for $N_k$ for $k=1,2,3,4,5$ and Conjecture 5.3 for the first five $2$-select groups. The $16$th, $19$th, and $20$th rows of Table 3 show that $\la 256,38 \ra, \la 512,451 \ra,$ and $\la 512,858 \ra$ respectively satisfy Conjecture 5.3. 

We can also ask which viable quotients of $N_k$ arise as wide tower groups. For $N_1$ both do, but for $N_2$, whereas both $\la 64,19 \ra$ and $\la 32,2 \ra$ are viable quotients, their IPADs have not been seen for any cyclic cubic field as the wide IPAD paired with the narrow IPAD of $N_2$. We therefore believe that if the narrow group is $N_2$ (i.e, $\la 64,19 \ra$), then the wide group must be $\la 8,4 \ra$. With the same reasoning we make the following conjecture. The only slightly different case is (2), since $N_3$ has both $\la 128,40 \ra$ and $\la 32,2 \ra$ as viable quotients with fields existing with the corresponding pair of narrow/wide IPADs. If, however, we check the first $6$ such fields $K$ with wide tower group $\la 128,40 \ra$, by looking at the narrow 2-class group of a certain octic extension of $K$, we see that the narrow tower group is not $N_3$ (so is $N_6$ or $N_7$).

\begin{conjecture} 

(1) If a cyclic cubic field has narrow tower group $N_2 = \la 64,19 \ra$, then its wide tower group is $\la 8,4 \ra$.

(2) If a cyclic cubic field has narrow tower group $N_3 = \la 256,36 \ra$, then its wide tower group is $\la 32,2 \ra$.

(3) If a cyclic cubic field has narrow tower group $N_4$, then its wide tower group is $\la 64,19 \ra$.

(4) If a cyclic cubic field has narrow tower group $N_5$, then its wide tower group is $\la 64,19 \ra$.

(5) If a cyclic cubic field has narrow tower group $N_6$, then its wide tower group is $\la 128,40 \ra$.

(6) If a cyclic cubic field has narrow tower group $N_7$, then its wide tower group is $\la 128,40 \ra$.

\end{conjecture}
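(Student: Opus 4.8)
We outline an approach; as the remarks at the end of the introduction suggest, a complete proof appears to need an arithmetic input not yet available, and we indicate below where the obstruction lies. The plan is to treat each $N_k$ separately. Since $G_2(K)=G^+_2(K)/\la\la z\ra\ra$ with $z$ complex conjugation of order $1$ or $2$ (Section~5), conditioning on $G^+_2(K)\cong N_k$ forces the wide tower group to be a viable quotient of $N_k$, so the first step is the finite \magma\ computation of the list of all such quotients. Part~(2) warns that this list has length $>1$ --- both $\la 32,2\ra$ and $\la 128,40\ra$ are viable quotients of $N_3$ --- so what follows cannot be purely group-theoretic. Next I would prune the list using the narrow/wide class group relation. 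Complex conjugation permutes the three real places of $K$, so the signature module $\{\pm1\}^{3}$ is $\F_2\oplus V$ as an $\F_2[\Delta]$-module; since $-1$ lies in the trivial summand, the image of $\o_K^\times$ is an $\F_2[\Delta]$-submodule containing that summand, hence equals either the trivial line or all of $\{\pm1\}^{3}$. Therefore $\ker\bigl(\Cl^+_2(K)\twoheadrightarrow\Cl_2(K)\bigr)$ is $0$ or a copy of $V$, and in the latter case --- since by Lemma~2.1 the generator rank must remain $2$ --- it is the $2$-torsion socle of $N_k^{\ab}$. Thus $\Cl_2(K)$ is one of the two explicit groups $N_k^{\ab}$ and $N_k^{\ab}/\mathrm{soc}$, which eliminates every viable quotient whose abelianization is neither; for several $N_k$ this already isolates a single candidate.

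When two candidates survive --- part~(2), and any case in which the surviving viable quotients share an abelianization --- the plan is to pass to a strictly finer invariant by descending one more level in the tower: the abelianizations of the open subgroups of index $4$ and $8$, that is, the narrow $2$-class groups of the quartic and octic extensions of $K$. This is exactly the device used in the paragraph preceding this conjecture to rule out $N_3$ for the six sample fields with wide group $\la 128,40\ra$. Using that the IPAD of a group is a quotient of that of any descendant and stabilizes once it does (Sections~2--3), one computes this extended invariant for $N_k$ and for the descendant of $N_k$ realizing the unwanted candidate, and checks they disagree; that candidate then cannot occur as the wide group paired with narrow group $N_k$.

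The main obstacle sits in the pruning step: the dichotomy ``$\ker(\Cl^+_2\to\Cl_2)=0$ versus $=V$'' is precisely the dichotomy ``signature rank of $\o_K^\times$ equals $3$ versus $1$,'' and this is a genuine arithmetic datum not visibly determined by $G^+_2(K)$; moreover, even with the signature rank known, it is not clear the normal $\Delta$-closure of a putative complex conjugation inside $N_k$ is forced to have the predicted size. A real proof therefore seems to require a new input that pins down the $\Delta$-conjugacy class of $z$ in $N_k$ --- plausibly a reflection- or governing-field statement relating $\Cl^+_2(K)$ to class data of auxiliary fields, in the spirit of the embedding-theoretic arguments of Section~4 --- together with a guarantee that the finer-invariant comparison never degenerates (a priori the extended IPADs of two distinct descendants could coincide, forcing one still deeper with no evident termination). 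Absent such an input, the methods of this paper verify the conjecture only for the finitely many fields tabulated in Table~\ref{table-census-abs}.
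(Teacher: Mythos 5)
This statement is a conjecture in the paper, not a theorem, and the authors give no proof of it: their justification consists of the group-theoretic constraint that the wide tower group must be a viable quotient of $N_k$ (Section 5), the empirical absence in the census of $500{,}000$ fields of the IPAD pairs that would witness any other viable quotient, and, for the ambiguous case (2), a computation of narrow class groups of octic extensions for six sample fields showing the narrow group there is $N_6$ or $N_7$ rather than $N_3$. Your outline reproduces exactly these ingredients --- the viable-quotient list, the IPAD comparison, and the octic-extension refinement --- and your diagnosis of the obstruction (that pinning down the $\Delta$-conjugacy class of complex conjugation inside $N_k$ is genuinely arithmetic data, not determined by the abstract group) is accurate and matches the authors' own framing of why only a conjecture is claimed. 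Your additional pruning step via the sequence $1 \to (\pm 1)^3/\mathrm{sgn}_\infty(\o_K^\times) \to \Cl^+_2(K) \to \Cl_2(K) \to 1$ and the decomposition $(\pm1)^3 \cong \F_2 \oplus V$ is sound (it is the same sequence the paper invokes in Section 7.4 to show the ratio of orders is $1$ or $4$), though as you note it only narrows candidates by abelianization and cannot close the gap. There is no discrepancy with the paper; the one thing to make explicit is that what you have written is, like the paper's own discussion, evidence supporting a conjecture rather than a proof, and the honest stopping point is exactly where you stopped.
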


\begin{remark} Consider the $5$th row in Table~\ref{explanation}. The wide tower group is $\la 32,2 \ra$. Of the $800$ $2$-special groups, only $N_8$ and $N_{12}$ have this narrow IPAD and a viable quotient isomorphic to $\la 32,2 \ra$. We can distinguish between these two cases by computing narrow class groups of certain octic extensions of $K$. We did this for $6$ fields with this pair of IPADs and in each case the narrow tower group turned out not to be $N_8$. We also found some larger $2$-special groups, of $2$-class 7 and order $2^{20}$, with this IPAD and a viable quotient isomorphic to $\la 32,2 \ra$.
\end{remark}

\begin{remark} Consider the $6$th row in Table~\ref{explanation}. The wide tower group is $\la 64,19 \ra$. Of the $800$ $2$-special groups, only $N_8$ has this narrow IPAD and a viable quotient isomorphic to $\la 64,19 \ra$. We do not know whether there exist larger $2$-special groups with this property.
\end{remark}

\begin{remark} Let $G$ be a $2$-select group. It is fundamental to ask which $2$-special groups have $G$ as a viable quotient. The above remarks consider this briefly for $G = \la 32,2 \ra$ and $\la 64,19 \ra$ respectively. The last section of this paper indicates that if $G$ is a certain group of order $2048$, then it is a viable quotient of some infinite $2$-special group. Extensive investigations suggest that $\la 4,2 \ra$ is a viable quotient of only $N_1$ and that $\la 8,4 \ra$ is a viable quotient of only $N_1$ and $N_2$. We do not know whether every $2$-select group is a viable quotient of only finitely many $2$-special groups. This will be key in understanding the joint distribution we seek.
\end{remark}

In some cases it is very clear how often a certain $2$-special $2$-group arises as $G^+_2(K)$ as $K$ varies. For instance, for $\la 8,4 \ra$, the frequency is almost exactly $3/4$, whereas for $\la 64,19 \ra$ the frequency is almost exactly $3/16$. What is perhaps surprising is that the frequencies do not match the proportion of pairs of pairs yielding a given $2$-special group by the method of Theorem 4.2 (3).

Ideally, we would like a formula for the proportion of fields with a given narrow $2$-class tower group $N$ and given wide $2$-class tower group $W$, in terms of $N$ and $W$, but since we have yet to understand which pairs $(N,W)$ arise, even a conjectural formula is currently out of reach. We can, however, use the data to make some related conjectures regarding proportions of fields.

\subsection{Proportion for narrow $2$-class $3$ quotients}

More detailed analysis allows us to make reliable conjectures as to the frequencies with which $Q_3(G^+_2(K))$ arises as $K$ varies. 

\begin{lemma} Among $2$-special groups $G$, the form of $\IPAD(G)$ determines $Q_3(G)$. In particular, with the help of Table~\ref{IPAD_data},

(1) if $\IPAD(G) = [[2,2];[4];[2]]$, then $Q_3(G) = \la 8,4 \ra$;

(2) if $\IPAD(G) = [[4,4];[2,2,4];[2,2,4]]$, then $Q_3(G) = \la 64,19 \ra$;

(3) if $\IPAD(G) = [[\ast,\ast];[-,-,-];[-,-,-]]$, then $Q_3(G) = \la 256,2 \ra$;

(4) if $\IPAD(G) = [[4,4];[2,2,8];[2,2,2,2,4]]$, then $Q_3(G) = \la 256,35 \ra$;

(5)  if $\IPAD(G) = [[4,4];[2,4,4];[4,4,4]]$,  then $Q_3(G) = \la 256,36 \ra$;

(6) if $\IPAD(G) = [[4,4];[-,-,-];[-,-,-]]$, other than above, then $Q_3(G) = \la 256,38 \ra$;

(7) if $\IPAD(G) = [[\ast,\ast];[-,-,-];[-,-,-,-,-]]$, then $Q_3(G) = Q_3(F)$ of order $1024$.

Here $\ast$ stands for an integer $\geq 8$ and $-$ stands for an integer $\geq 2$.

\end{lemma}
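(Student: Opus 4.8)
The plan is to turn the claim into a finite check built on three things already in hand: Theorem~4.2 (the intrinsic characterisation of $2$-special groups), the classification of $2$-special groups together with the O'Brien tree of Figure~1, and the fact from Section~3 that IPADs only grow down the tree — the IPAD of a group is a quotient of that of any descendant, and once it agrees on a parent--child pair it is constant thereafter.

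First I would pin down $Q_3(G)$. If the $2$-special group $G$ has $2$-class at most $3$, then $Q_3(G)=G$, which by the classification is one of $\la 8,4\ra$, $\la 64,19\ra$, $\la 256,36\ra$. If $G$ has $2$-class at least $4$, then $Q_3(G)$ is a valid $2$-group of $2$-class exactly $3$ with $d=2$; the only such groups are the abelian $[8,8]$ and the eleven valid children of $Q_2(F)$ listed in Figure~1, and $[8,8]$ is impossible because $G$ would then be a descendant of $[8,8]$, hence one of the abelian groups $[2^n,2^n]$, which is not $2$-special. So $Q_3(G)$ is one of the eleven, and it must admit a proper $2$-special descendant, namely $G$. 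Going through the eleven and using Theorem~4.2(2) together with O'Brien's algorithm, one records which of their subtrees actually contain a $2$-special group: the childless $\la 128,40\ra$, $\la 128,41\ra$ do not, and neither do the subtrees of $\la 128,5\ra$, $\la 128,36\ra$, $\la 512,3\ra$, whereas $\la 64,19\ra$ (itself $2$-special and childless), $\la 256,2\ra$, $\la 256,35\ra$, $\la 256,36\ra$, $\la 256,38\ra$ and $Q_3(F)$ do. Together with $\la 8,4\ra$ this leaves exactly the seven groups named in (1)--(7).

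Next, for each of these seven groups $C$ I would use Table~\ref{IPAD_data} and IPAD-monotonicity to list the IPADs realised by the $2$-special descendants of $C$: for $\la 8,4\ra$, $\la 64,19\ra$, $\la 256,36\ra$ the relevant subtrees are tiny, and for the remaining four the ``clumps'' of Table~\ref{IPAD_data} already exhibit a finite list (in particular every $2$-special descendant of $\la 256,35\ra$, $\la 256,36\ra$, $\la 256,38\ra$ keeps $G^{\ab}=[4,4]$, and every one of $\la 256,2\ra$, $Q_3(F)$ keeps $G^{\ab}=[2^n,2^n]$ with $n\ge 3$). Then one reads off the separation: the abelianization, being the first IPAD entry, splits the seven into $\la 8,4\ra$ ($G^{\ab}=[2,2]$), the four with $G^{\ab}=[4,4]$, and the two with $G^{\ab}=[2^n,2^n]$, $n\ge 3$; within the $[4,4]$ block the number of invariant factors of $\Phi(G)^{\ab}$ together with the isomorphism type of $M^{\ab}$ single out $\la 64,19\ra$, $\la 256,35\ra$, $\la 256,36\ra$ and, as the residual case, $\la 256,38\ra$, giving (2),(4),(5),(6); within the $[2^n,2^n]$ block the number of invariant factors of $\Phi(G)^{\ab}$ (three versus five) separates $\la 256,2\ra$ from $Q_3(F)$, giving (3) and (7). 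A final check that the seven IPAD-shapes are pairwise disjoint completes the argument.

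The main obstacle is the assertion in the second step that the subtrees below $\la 128,5\ra$, $\la 128,36\ra$ and $\la 512,3\ra$ contain no $2$-special group. This is essential: $\la 128,5\ra$ and $\la 512,3\ra$ already have IPADs of the shapes in (3) and (7) but with the ``wrong'' $Q_3$, and $\la 128,36\ra$ has IPAD $[[4,4];[2,2,4];[2,2,2,2,2]]$, which matches none of the seven shapes — so a single $2$-special descendant of any of them would wreck the classification. Establishing emptiness of these subtrees amounts to running O'Brien's algorithm down them and checking that $M(\,\cdot\,\rtimes\Delta)$ is nontrivial at every node: routine in principle, but a genuine finite-but-not-by-hand computation, and it is the same exhaustive search that underlies the census of $2$-special groups in Section~4.
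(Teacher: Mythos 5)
Your reconstruction matches the paper's (entirely implicit) justification: the lemma is stated with no proof beyond the pointer to Table~1, and the intended verification is exactly the cross-check you describe between the census of $2$-special groups from Section~4 and the IPAD clumps of Table~1, separated by abelianization and by the shapes of $M^{\ab}$ and $\Phi(G)^{\ab}$. One caveat on the step you rightly single out as the crux --- that the subtrees below $\la 128,5\ra$, $\la 128,36\ra$ and $\la 512,3\ra$ contain no $2$-special groups: this is not literally a finite computation, since those subtrees are a priori infinite, and the paper's own census is exhaustive only through $2$-class $4$ and rests on ``high confidence'' random search beyond that, so your argument is exactly as rigorous as the paper's, no more and no less.
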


Using this and our data on occurrence of IPADs, we obtain the following conjectural frequencies for 
$Q_3(G^+_2(K))$ given in Table~\ref{narrow_conj}.

{\footnotesize
\begin{table}[htp]
\caption{Conjectural frequency for $Q_3(G^+_2(K))$.}
\label{narrow_conj}
\begin{center}
\begin{tabular}{ |c|c|c| } 
 \hline
\text{Group} & \text{Obs freq} & \text{Conj freq}\\
\hline
$ \la 8,4 \ra$ & 0.7531 & 3/4 \\
\hline
$ \la 64,19 \ra$ & 0.1828 & 3/16 \\
\hline
$ \la 256,2 \ra$ & 0.0080 & 1/128 \\
\hline
$ \la 256,35 \ra$ & 0.0239 & 3/128 \\
\hline
$ \la 256,36 \ra$ & 0.0163 & 1/64 \\
\hline
$ \la 256,38 \ra$ & 0.0079 & 1/128 \\
\hline
$ Q_3(F)$ & 0.0079 & 1/128 \\
 \hline
\end{tabular}
\end{center}
\end{table}
}

It is a mystery as to why such similar groups as $\la 256,35 \ra$ and $\la 256,38 \ra$ do not arise equally often.

\subsection{Proportion for wide $2$-class $3$ quotients}

Likewise, we can make reliable conjectures as to the frequencies with which $Q_3(G_2(K))$ arises as $K$ varies. 

\begin{lemma} Among $2$-select groups, the form of $\IPAD(G)$ determines $Q_3(G)$. In particular, with the help of Table~\ref{IPAD_data},

(1) if $\IPAD(G) = [[2,2];[2];[]]$, then $Q_3(G) = \la 4,2 \ra$;

(2) if $\IPAD(G) = [[2,2];[4];[2]]$, then $Q_3(G) = \la 8,4 \ra$;

(3) if $\IPAD(G) = [[4,4];[2,2,4];[2,2,2]]$, then $Q_3(G) = \la 32,2 \ra$;

(4) if $\IPAD(G) = [[4,4];[2,2,4];[2,2,4]]$, then $Q_3(G) = \la 64,19 \ra$;

(5) if $\IPAD(G) = [[4,4];[2,2,4];[2,2,2,2,2]]$, then $Q_3(G) = \la 128,36 \ra$;

(6) if $\IPAD(G) = [[4,4];[2,2,4];[2,4,4]]$, then $Q_3(G) = \la 128,40 \ra$;

(7) if $\IPAD(G) = [[\ast,\ast];[-,-,-];[-,-,-]]$, then $Q_3(G) = \la 256,2 \ra$;

(8) if $\IPAD(G) = [[4,4];[2,2,8];[2,2,2,2,4]]$, then $Q_3(G) = \la 256,35 \ra$;

(9)  if $\IPAD(G) = [[4,4];[2,4,4];[4,4,4]]$,  then $Q_3(G) = \la 256,36 \ra$;

(10) if $\IPAD(G) = [[4,4];[-,-,-];[-,-,-]]$, other than above, then $Q_3(G) = \la 256,38 \ra$;

(11) if $\IPAD(G) = [[\ast,\ast];[-,-,-];[-,-,-,-,-]]$, then $Q_3(G) = Q_3(F)$ of order $1024$.

Here $\ast$ stands for an integer $\geq 8$ and $-$ stands for an integer $\geq 2$.

\end{lemma}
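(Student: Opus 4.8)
The plan is to imitate the proof of the preceding lemma, which is its $2$-special analogue: first confine $Q_3(G)$ to an explicit finite list of valid $2$-groups of $2$-class at most $3$, then show that the three abelian groups comprising $\IPAD(G)$ already separate the groups on that list. For the confinement step, write $G \cong N/\la\la z\ra\ra$ with $N$ a $2$-special group and $z$ of order $1$ or $2$, as in the definition of $2$-select. Since for a continuous surjection of pro-$2$ groups $P_n$ of the quotient is the image of $P_n$, one gets $P_3(G)$ equal to the image of $P_3(N)$ and hence $Q_3(G) \cong Q_3(N)/\la\la \bar z\ra\ra$, where $\bar z$ is the image of $z$; thus $Q_3(G)$ is a \emph{viable} quotient of $Q_3(N)$. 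By the previous lemma, $Q_3(N)$ is one of the seven groups $\la 8,4\ra$, $\la 64,19\ra$, $\la 256,2\ra$, $\la 256,35\ra$, $\la 256,36\ra$, $\la 256,38\ra$, $Q_3(F)$. Each is finite, so enumerating all of their viable quotients is a finite task; one then checks by direct computation that the viable quotients actually arising as $Q_3(G)$ for some $2$-select $G$ are precisely the eleven groups $\la 4,2\ra$, $\la 8,4\ra$, $\la 32,2\ra$, $\la 64,19\ra$, $\la 128,36\ra$, $\la 128,40\ra$, $\la 256,2\ra$, $\la 256,35\ra$, $\la 256,36\ra$, $\la 256,38\ra$, $Q_3(F)$ appearing in (1)--(11).

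Next, for each such group $W$ I would pin down which IPADs are realized by $2$-select groups $G$ with $Q_3(G)=W$. The triple $\IPAD(W)$ itself appears in Table~\ref{IPAD_data} in every case (for instance $\IPAD(\la 4,2\ra) = [[2,2];[2];[]]$, $\IPAD(\la 128,36\ra) = [[4,4];[2,2,4];[2,2,2,2,2]]$, $\IPAD(\la 256,36\ra) = [[4,4];[2,4,4];[4,4,4]]$). By the remarks of Section~3, whenever $G$ is a descendant of $W$ the triple $\IPAD(W)$ is a quotient of $\IPAD(G)$, and once the IPAD of a group coincides with that of one of its children it is constant throughout the subtree below it. So the plan is to run the descendant algorithm of O'Brien on the valid subtree rooted at $W$, record the finitely many IPADs occurring before stabilization, and intersect with the list of $2$-select groups; this produces the exact set of IPADs of $2$-select $G$ with $Q_3(G)=W$, which one then verifies matches the form attached to $W$ in (1)--(11). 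The generic descriptions $[[\ast,\ast];[-,-,-];[-,-,-]]$ for $\la 256,2\ra$ and $[[\ast,\ast];[-,-,-];[-,-,-,-,-]]$ for $Q_3(F)$ record that along those two branches the abelianization stays of the form $[2^k,2^k]$ with $k \ge 3$ while $M^{ab}$ and $\Phi(G)^{ab}$ stabilize at $3$- and ($3$- or $5$-)generated groups respectively, and the catch-all $[[4,4];[-,-,-];[-,-,-]]$, other than above, for $\la 256,38\ra$ is by construction the complement, inside the $[4,4]$-block, of the forms already assigned to $\la 64,19\ra$, $\la 128,36\ra$, $\la 128,40\ra$, $\la 256,35\ra$, $\la 256,36\ra$.

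It then remains to check that the eleven forms in (1)--(11) are pairwise incompatible, which yields the conclusion that $\IPAD(G)$ determines $Q_3(G)$. This is immediate from the first entry $G^{ab}$ — which is $[2,2]$, $[4,4]$, or $[2^k,2^k]$ with $k \ge 3$ — except within the $[4,4]$-block and within the $[2^k,2^k]$-block. Inside the latter, the number of generators of $\Phi(G)^{ab}$ ($3$ versus $5$) separates $\la 256,2\ra$ from $Q_3(F)$. Inside the former, since every such $G$ has $3$-generated $M^{ab}$, the pair $M^{ab}$ together with the isomorphism type of $\Phi(G)^{ab}$ (namely $[2,2,4]$ for $\la 64,19\ra$, $[2,2,2,2,2]$ for $\la 128,36\ra$, $[2,4,4]$ for $\la 128,40\ra$, $[2,2,2,2,4]$ for $\la 256,35\ra$, $[4,4,4]$ for $\la 256,36\ra$, $[2,2,2]$ for $\la 32,2\ra$, and anything else for $\la 256,38\ra$) separates the seven possibilities; here one uses that $\IPAD(\la 256,38\ra) = [[4,4];[2,2,8];[4,4,4]]$ is a quotient of $\IPAD(G)$ whenever $Q_3(G) = \la 256,38\ra$, so no such $G$ can have $M^{ab} = [2,2,4]$ or $\Phi(G)^{ab} = [2,2,2,2,4]$, ruling out spurious overlaps.

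The step I expect to be the genuine obstacle is the completeness underlying the first two paragraphs: the family of $2$-special groups is infinite (see Section~8), so one must argue — exactly as for the census of $2$-special groups in Section~4 — that the finite search exhibiting the pattern is genuinely exhaustive, i.e. that no $2$-special group of large order has a viable quotient whose IPAD, or whose $Q_3$, falls outside the table. A closely related delicate point is the catch-all case (10): one must be certain that every $2$-select $G$ with $G^{ab} = [4,4]$ and $3$-generated $M^{ab}$ and $\Phi(G)^{ab}$ not matching (3), (4), (5), (6), (8), (9) really does have $Q_3(G) = \la 256,38\ra$, rather than some valid $2$-class-$3$ group not yet appearing on our list.
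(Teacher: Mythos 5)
The paper states this lemma (like its $2$-special analogue, Lemma 7.4) without any written proof: it is a computational assertion resting on the classification of $2$-select groups in Section~5 and the IPAD bookkeeping of Table~\ref{IPAD_data}. Your reconstruction is essentially the intended verification, and its logical skeleton is sound: the reduction $Q_3(N/\la\la z\ra\ra)\cong Q_3(N)/\la\la\bar z\ra\ra$ is correct because the $P_n$ are verbal, so $Q_3(G)$ is indeed a viable quotient of one of the seven groups of Lemma 7.4, and from there the problem is a finite check on the valid subtrees below those eleven candidates, exactly as you describe. The one genuine caveat is the one you already name: the enumeration of $2$-special (hence $2$-select) groups is obtained by random search and is asserted only ``with high confidence'' in Section~4, so the exhaustiveness of the finite check inherits that conditionality --- but this is a limitation of the paper's own classification, not a defect specific to your argument.
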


{\footnotesize
\begin{table}[htp]
\caption{Conjectural frequency for $Q_3(G_2(K))$.}
\label{wide_conj}
\begin{center}
\begin{tabular}{ |c|c|c| } 
 \hline
\text{Group} & \text{Obs freq} & \text{Conj freq}\\
\hline
$ \la 4,2 \ra$ & 0.5038 & 1/2 \\
\hline
$ \la 8,4 \ra$ & 0.4321 & 7/16 \\
\hline
$ \la 32,2 \ra$ & 0.0240 & 3/128 \\
\hline
$ \la 64,19 \ra$ & 0.0179 & 9/512 \\
\hline
$\la 128,36 \ra$ & 0.0041 & 1/256 \\
\hline
$ \la 128,40 \ra$ & 0.0041 & 1/256 \\
\hline
$ \la 256,2 \ra$ & 0.0036 & 7/2048 \\
\hline
$ \la 256,35 \ra$ & 0.0034 & 7/2048 \\
\hline
$ \la 256,36 \ra$ & 0.0028 & 3/1024 \\
\hline
$ \la 256,38 \ra$ & 0.0035 & 7/2048 \\
\hline
$ Q_3(F)$ & 0.0006 & 1/2048 \\
 \hline
\end{tabular}
\end{center}
\end{table}
}

\subsection{Joint distribution for narrow/wide $2$-class $3$ quotients}

Since we actually compute how often various narrow and wide IPAD pairs occur together, we can refine the computations of the last two subsections to give a conjectural joint distribution on the $77$ possibilities given by Lemmas 7.4 and 7.5. Summing its columns and rows gives the marginal distributions given in Tables~\ref{narrow_conj} and~\ref{wide_conj}. A key step in guessing the joint distribution for the narrow and wide tower groups would be to understand Table~\ref{joint_conj}.

{\footnotesize
\begin{table}[htp]
\caption{Conjectural joint distribution for $(Q_3(G_2(K)),Q_3(G^+_2(K)))$.}
\label{joint_conj}
\begin{center}
\begin{tabular}{ |c|c|c|c|c|c|c|c| } 
 \hline
\text{Narrow} & $\la 8,4 \ra$ & $\la 64,19 \ra$ & $\la 256,2 \ra$ & $\la 256,35 \ra$ & $\la 256,36 \ra$ & $\la 256,38 \ra$ & $Q_3(F)$ \\
\hline
\text{Wide} & & & & & & & \\
\hline
$\la 4,2 \ra$ & 1/2 & 0 & 0 & 0 & 0 & 0 & 0 \\ 
 \hline
 $\la 8,4 \ra$ & 1/4 & 3/16 & 0 & 0 & 0 & 0 & 0 \\
 \hline
 $\la 32,2 \ra$ & 0 & 0 & 0 & 3/256 & 3/256 & 0 & 0 \\
 \hline
 $\la 64,19 \ra$ & 0 & 0 & 3/512 & 3/512 & 0 & 3/512 & 0 \\
 \hline
 $\la 128,36 \ra$ & 0 & 0 & 0 & 1/256 & 0 & 0 & 0 \\
 \hline
 $\la 128,40 \ra$ & 0 & 0 & 0 & 0 & 1/256 & 0 & 0 \\
 \hline
 $\la 256,2 \ra$ & 0 & 0 & 1/512 & 0 & 0 & 0 & 3/2048 \\
 \hline
 $\la 256,35 \ra$ & 0 & 0 & 0 & 1/512 & 0 & 0 & 3/2048 \\
 \hline
 $\la 256,36 \ra$ & 0 & 0 & 0 & 0 & 0 & 0 & 3/1024 \\
 \hline
 $\la 256,38 \ra$ & 0 & 0 & 0 & 0 & 0 & 1/512 & 3/2048 \\
 \hline
 $Q_3(F)$ & 0 & 0 & 0 & 0 & 0 & 0 & 1/2048 \\
 \hline
\end{tabular}
\end{center}
\end{table}
}

\subsection{Narrow/wide $2$-class group pairs}

As noted in Lemma 2.1, the number of generators of the wide and narrow $2$-class groups are the same. It is, however, possible for the $2$-class groups not to be isomorphic, although the difference cannot be large. In fact the ratio of their orders is always $1$ or $4$.

This follows from the exact sequence
$$ 1 \rightarrow (\pm 1)^3/\text{sgn}_\infty(O_K^*) \rightarrow Cl_2^+(K) \rightarrow Cl_2(K) \rightarrow 1 $$
\cite{BVV} and the fact that the orders of $Cl_2^+(K)$ and $Cl_2(K)$ are both squares. 

{\footnotesize
\begin{table}[htp]
\caption{Conjectural frequency for narrow/wide $2$-class group pairs.}\label{abelian_conj}
\begin{center}
\begin{tabular}{ |c|c|c|c| } 
 \hline
\text{Narrow} & \text{Wide} & \text{Obs freq} & \text{Conj freq}  \\
 \hline
 $[2,2]$ & $[2,2]$ & 0.7531 & 3/4 \\
 \hline
 $[4,4]$ & $[2,2]$ & 0.1828 & 3/16 \\
 \hline
 $[4,4]$ & $[4,4]$ & 0.0481 & 3/64 \\
 \hline
 $[8,8]$ & $[4,4]$ & 0.0118 & 3/256 \\
 \hline
 $[8,8]$ & $[8,8]$ & 0.0032 & 3/1024 \\
 \hline
 $[16,16]$ & $[8,8]$ & 0.0007 & 3/4096 \\
 \hline
 $[16,16]$ & $[16,16]$ & 0.0002 & 3/16384 \\
 \hline
 ... & ... & & \\
\hline
\end{tabular}
\end{center}
\end{table}
}

\begin{conjecture} Among cyclic cubic fields $K$ with $2$-generator $2$-class group, following Table~\ref{abelian_conj}, we conjecture that $Cl^+_2(K) \cong [2^n,2^n] \cong Cl_2(K)$ for a proportion $3/2^{4n-2}$ of fields and that $Cl^+_2(K) \cong [2^{n+1},2^{n+1}]$ and $Cl_2(K) \cong [2^n,2^n]$ for a proportion $3/2^{4n}$ of fields. 
\end{conjecture}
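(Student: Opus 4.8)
\medskip
\noindent\emph{Toward a proof.}
The plan is to separate the conjecture into a ``size'' part and a ``signature'' part and to reduce it to Malle's conjecture together with one clean density statement. Restrict to the case of a $2$-generator $2$-class group, that is $d=1$ in Lemma~2.1. Viewed as a module over $\Z_2[\Delta]\cong\Z_2\times\Z_2[\zeta_3]$, the group $Cl_2(K)$ is supported on the second factor (Lemma~2.1 gives it no trivial $\Delta$-constituents) and is cyclic over $\Z_2[\zeta_3]$ since $d=1$; as $2$ is inert in $\Q(\zeta_3)$, the ring $\Z_2[\zeta_3]$ is a discrete valuation ring with uniformizer $2$, so $Cl_2(K)\cong\Z_2[\zeta_3]/(2^n)\cong[2^n,2^n]$ for some $n\ge 1$, and likewise $Cl_2^+(K)\cong[2^m,2^m]$. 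The exact sequence of \cite{BVV} presents $Cl_2^+(K)$ as an extension of $Cl_2(K)$ by $A_K:=(\pm1)^3/\mathrm{sgn}_\infty(O_K^*)$. Since $\sigma$ permutes the three real places compatibly with its action on units, $\mathrm{sgn}_\infty(O_K^*)$ is a $\Delta$-submodule of $(\pm1)^3$ containing $\mathrm{sgn}_\infty(-1)=(-1,-1,-1)$, so $A_K$ is a $\Delta$-quotient of $(\pm1)^3/\langle(-1,-1,-1)\rangle\cong V$; hence $A_K$ is trivial or isomorphic to $V$, and $|A_K|\in\{1,4\}$, recovering the observation preceding the conjecture. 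Therefore, when $Cl_2(K)\cong[2^n,2^n]$ we have $Cl_2^+(K)\cong[2^n,2^n]$ if $\mathrm{sgn}_\infty(O_K^*)=(\pm1)^3$ and $Cl_2^+(K)\cong[2^{n+1},2^{n+1}]$ otherwise. Writing $\delta(K)=0$ in the first case and $\delta(K)=1$ in the second, the conjecture is equivalent to the conjunction of: (I) among cyclic cubic fields with $d=1$, ordered by discriminant, $Cl_2(K)\cong[2^n,2^n]$ with density $15/16^n$; and (II) among the same fields, $\delta(K)=0$ with density $4/5$, with $\delta(K)$ independent of the isomorphism class of $Cl_2(K)$. (Indeed $\tfrac{15}{16^n}\cdot\tfrac45=\tfrac{3}{2^{4n-2}}$ and $\tfrac{15}{16^n}\cdot\tfrac15=\tfrac{3}{2^{4n}}$.) Since (I) is exactly Malle's conjecture from \cite{M08}, the real content is (II).

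For (II) the convenient object is the signature map $\mathrm{sgn}_\infty\colon O_K^*/(O_K^*)^2\to(\pm1)^3$. Because $K$ is totally real of unit rank $2$ with $\mu_K=\{\pm1\}$, a short argument using $u\,\sigma(u)\,\sigma^2(u)=N_{K/\Q}(u)=\pm1$ shows the source is $\cong\F_2\oplus V$ as an $\F_2[\Delta]$-module, the trivial summand being spanned by the class of $-1$, while the target is the permutation module, $\cong\F_2\oplus V$ with trivial summand the diagonal. The map always carries the trivial summand isomorphically onto the diagonal, so $\delta(K)=0$ precisely when the induced map on $V$-components is an isomorphism, that is when the corresponding element of $\mathrm{Hom}_{\F_2[\Delta]}(V,V)\cong\F_4$ is nonzero. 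The first step is to pin down the correct statistics of $A_K$: the naive model --- uniform over $\F_4$, equivalently a Cohen--Lenstra weighting $1/|\mathrm{Aut}_\Delta(A_K)|$ on $A_K\in\{0,V\}$ --- gives density $3/4$ for $\delta=0$, not $4/5$, so the true model must encode a correlation between $\delta(K)$ and the conditioning event $d(Cl_2(K))=2$. Genus theory is the natural mechanism: both $d(Cl_2^+(K))$ and $\mathrm{sgn}_\infty(O_K^*)$ are governed by the ramified primes of $K/\Q$ and their splitting through R\'edei-type matrices (cf.\ \cite{MA}). I would therefore (a) express the relevant $2$-ranks and the defect $\delta(K)$ explicitly in terms of this ramification data, (b) restrict to the sub-ensemble with $d=1$ ordered by discriminant, and (c) evaluate the joint density of $(n,\delta)$, expecting it to factor as the product of Malle's $\tfrac{15}{16^n}$ and a fixed $\delta$-distribution with $\Prob(\delta=0)=\tfrac45$. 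A more structural alternative would model the pair $(G_2^+(K),z)$ --- the narrow tower group together with complex conjugation --- by a random $2$-special group equipped with a $\Delta$-invariant subgroup of order $1$ or $2$, and then push forward to $\bigl((G_2^+)^{\ab},(G_2^+/\langle\langle z\rangle\rangle)^{\ab}\bigr)$; but this needs the Cohen--Lenstra-type measure on $2$-special groups that the paper does not yet possess, so I expect the genus-theoretic route to be the one that can be made rigorous.

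The main obstacle is (II), and specifically recovering the value $4/5$ rather than the naive $3/4$: this forces one to identify the correct heuristic for unit signatures of cyclic cubic fields conditioned on the $2$-class group having $2$-rank exactly $2$, and then to prove the resulting density statement over fields ordered by discriminant --- a task of the same depth as Malle's conjectures \cite{M08}, \cite{M10} themselves, which remain open. An unconditional proof is therefore out of reach at present. The realistic deliverable is the reduction carried out above (the conjecture is equivalent to (I) $+$ (II), with (I) $=$ Malle's conjecture), supplemented by a heuristic derivation of (II) from a genus-theoretic model and its confirmation against the data of Section~7 (Table~\ref{abelian_conj}), where $\delta(K)=0$ is observed for close to $80\%$ of the fields.
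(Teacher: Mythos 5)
This statement is a conjecture supported by the data in Table~\ref{abelian_conj}, and your reduction is precisely the paper's own justification: the predicted proportions factor as Malle's $15/16^n$ times a $4/5$ (resp.\ $1/5$) probability that the narrow and wide $2$-class groups agree (resp.\ differ), the latter being Conjecture 6.2.3 of \cite{BVV}, exactly the two consistency checks the paper records after stating the conjecture. Your module-theoretic derivation of $|Cl_2^+(K)|/|Cl_2(K)|\in\{1,4\}$ and the proposed genus-theoretic route to the $4/5$ are consistent elaborations rather than a different approach, and you correctly identify that no proof is available.
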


Note that this is consistent with Malle's heuristics (Section 6) since $3/2^{4n-2} + 3/2^{4n} = 15/16^n$.
It is also consistent with Conjecture 6.2.3 of~\cite{BVV} since each predicts that the narrow and wide $2$-class groups should agree $4/5$ of the time.

Looking at these data closer, when $Cl^+_2(K) \cong [2,2]$ and $Cl_2(K) \cong [2,2]$, we know that $G^+_2(K) \cong \la 8,4 \ra$ and $G_2(K) \cong \la 8,4 \ra$ or $[2,2]$. From the first two lines of Table~\ref{last-table}, the first possibility empirically occurs half as often as the second (so of the $3/4$, the first accounts for $1/2$ and the second for $1/4$). When $Cl^+_2(K) \cong [4,4]$ and $Cl_2(K) \cong [2,2]$, the third line of Table~\ref{explanation} actually indicates that $G^+_2(K) \cong \la 64,19 \ra$ and $G_2(K) \cong \la 8,4 \ra$. 

Combining these, then, as $K$ varies, $G_2(K) \cong [2,2]$ for $1/2$ of the time whereas $G_2(K) \cong \la 8,4 \ra$ for $1/4 + 3/16 = 7/16$ of the time. This matches Rubinstein-Salzedo's~\cite{RS} observations well, since $8/15$ is close to $53.7\%$. We do, however, still make the following conjecture.

\begin{conjecture}
Precisely one half of unramified $[2,2]$-extensions of cyclic cubic fields extend to unramified $\la 8,4 \ra$-extensions.
\end{conjecture}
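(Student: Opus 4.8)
One natural line of attack on Conjecture~7.8 has two stages: a group‑theoretic reduction, and an arithmetic equidistribution statement.

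\emph{Reduction.} Restrict first to fields with $d=1$, so Lemma~2.1 provides a unique unramified $[2,2]$‑extension $L/K$, cut out by the canonical surjection $\phi\colon G_2(K)\twoheadrightarrow G_2(K)/\Phi(G_2(K))\cong[2,2]$. Because $\Phi(Q_8)=Z(Q_8)=[Q_8,Q_8]$ has order $2$ and any subgroup of $Q_8$ mapping onto $[2,2]$ is all of $Q_8$, the extension $L/K$ sits inside an unramified $Q_8$‑extension of $K$ if and only if $\phi$ lifts along $1\to\Z/2\to Q_8\to[2,2]\to1$, i.e.\ if and only if the obstruction $\phi^{*}(e_{Q_8})\in H^{2}(G_2(K),\F_2)$ vanishes, where $e_{Q_8}=\xi\cup\xi+\xi\cup\eta+\eta\cup\eta$ for the standard basis $\xi,\eta$ of $H^{1}([2,2],\F_2)$. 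The tree of Section~2 shows that a valid $2$‑generated pro‑$2$ group with abelianization $[2,2]$ is $[2,2]$ or $Q_8$, while a \emph{non‑abelian} valid $2$‑generated group with abelianization $[2^{n},2^{n}]$ is a descendant of $Q_2(F)$. For $G=Q_8$ the five‑term exact sequence of $1\to Z(Q_8)\to Q_8\to[2,2]\to1$ identifies $\ker\!\bigl(H^{2}([2,2],\F_2)\to H^{2}(Q_8,\F_2)\bigr)$ with $\langle e_{Q_8}\rangle$, so $\phi^{*}(e_{Q_8})=0$; for a descendant $G$ of $Q_2(F)=F/P_2(F)$ the relators lie in $P_2(F)$, hence have trivial image in $\Phi(F)/P_2(F)$, so every cup product and Bockstein vanishes in $H^{2}(Q_2(F),\F_2)$ and therefore $\phi^{*}(e_{Q_8})=0$; and for $G=[2,2]$ one has $\phi^{*}(e_{Q_8})=e_{Q_8}\ne0$. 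Finally $G_2(K)\cong[2^{n},2^{n}]$ with $n\ge2$ must be ruled out: the wide tower group is always $2$‑select (Section~5), so by Theorem~5.2 it suffices that $M([2^{n},2^{n}]\rtimes\Delta)$ have order $>2$ for all $n\ge1$ --- it is $\Z/4$ for $n=1$ by the proof of Theorem~4.1, and a direct Schur‑multiplier computation shows it never drops below that. Hence, for $d=1$,
\[
L/K\ \text{embeds in an unramified}\ Q_8\text{-extension}\iff G_2(K)\not\cong[2,2],
\]
so Conjecture~7.8 reduces to the statement that $G_2(K)\cong[2,2]$ for a proportion $1/2$ of the cyclic cubic fields with $d=1$, namely the first row of Table~\ref{wide_conj}. (Fields with $d\ge2$ supply many unramified $[2,2]$‑extensions apiece and must stay in the count; the argument below treats each such extension in the same way.)

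\emph{Equidistribution.} Any unramified $[2,2]$‑extension $L/K$ is Galois over $\Q$ with $\Gal(L/\Q)\cong A_4$, and an unramified $Q_8$‑extension above it, when one exists, is an $\mathrm{SL}_2(\F_3)$‑extension of $\Q$. So one must determine how often the $A_4$‑extension $L/\Q$ lifts along $1\to\Z/2\to\mathrm{SL}_2(\F_3)\to A_4\to1$ through a solution that remains unramified over $K$. Parametrising cyclic cubic fields by Kummer theory over $\Q(\zeta_3)$ (a cube‑free conductor built from split primes $p\equiv1\bmod 3$, up to a factor of $9$, together with a cubic character), one invokes embedding theory --- Serre's formula expressing the obstruction to $\widetilde{A_4}$‑lifting via the second Hasse--Witt invariant of a trace form, refined by the local conditions that pin the solution to be everywhere unramified over $K$ --- to write this obstruction as an explicit finite product of Hilbert/Legendre symbols in the primes dividing $\disc K$, a R\'edei‑type symbol valued in $\{\pm1\}$. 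The last step is to prove that this symbol is equidistributed across the family: via a reciprocity law (the cyclic‑cubic analogue of R\'edei reciprocity) it becomes a splitting condition in a governing extension $\Omega/\Q$, and the Chebotarev density theorem, together with a count of cyclic cubic fields of bounded conductor with prescribed splitting data, then delivers the density $1/2$.

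\emph{The main obstacle} is precisely this equidistribution. Cyclic cubic fields form a thin family, and the $Q_8$‑obstruction couples the order‑$3$ datum (which cubic character defines $K$) to essentially $2$‑adic data whose independence is not known; showing the governing‑field Frobenius condition is unbiased over the family is the cyclic‑cubic counterpart of --- and seems strictly harder than --- the theorems on the $8$‑rank of quadratic class groups of Fouvry--Kl\"uners, Stevenhagen and Smith. Indeed, after the reduction Conjecture~7.8 is equivalent to the conjunction of Malle's conjecture for the sizes of $\Cl_2(K)$ (Section~6) with the sharper frequency conjectured here in Table~\ref{wide_conj}, that $Q_3(G_2(K))\cong\la 8,4\ra$ for exactly $7/16$ of the cyclic cubic fields with $d=1$ --- both of which are themselves open --- so an unconditional proof appears out of reach; a realistic target would be the density $1/2$ conditional on a suitable joint‑equidistribution hypothesis, or for fields whose conductor has a bounded number of prime factors.
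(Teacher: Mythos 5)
The statement you are working on is a \emph{conjecture} in the paper, not a theorem: the paper offers no proof, only the census data of Section~7 and a consistency check against Rubinstein--Salzedo's counts, so there is no proof of record to compare against. Your submission, by your own framing, is a two-stage strategy whose second stage is openly unexecuted, and that second stage is where the genuine gap lies: nothing in your write-up establishes that the proposed R\'edei-type obstruction symbol is unbiased over the family of cyclic cubic fields, and no governing field, reciprocity law, or Chebotarev input is actually constructed. Indeed, after your reduction the conjecture is shown to be \emph{equivalent} to the (equally open) assertion that $G_2(K)\cong[2,2]$ for exactly half of the cyclic cubic fields with $2$-generated $2$-class group --- the first row of Table~\ref{wide_conj} --- so what you have is a reduction of one open conjecture to another, not a proof.

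That said, your reduction stage is essentially sound and is in substance the paper's own justification, which appears in the paragraph immediately following the conjecture: for $g=2$ the group $G_2(K)$ is either $[2,2]$ (in which case no unramified $\la 8,4\ra$-extension exists), or $\la 8,4\ra$, or a descendant of $Q_2(F)$ and hence has $\la 8,4\ra$ as a quotient, which yields the count $7/16+1/16=1/2$ conditional on the frequencies of Table~\ref{wide_conj}. Your cohomological packaging of this (vanishing of $\phi^*(e_{Q_8})$) is a legitimate, if heavier, way to say the same thing. Two details to repair: your Schur multiplier indexing is off by one, since $M([2,2]\rtimes\Delta)=M(A_4)\cong\Z/2$ and it is $M([4,4]\rtimes\Delta)$ that is $\Z/4$ in the proof of Theorem~4.1; and the claim that $M([2^n,2^n]\rtimes\Delta)$ has order greater than $2$ for all $n\ge 2$, which you need in order to exclude the $[2^n,2^n]$ branch of Figure~1, is asserted rather than proved (the paper also leans on this implicitly when it says the remaining cases are "necessarily" descendants of $Q_2(F)$). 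Neither of these affects the conclusion of the reduction, but neither does the reduction bring the conjecture any closer to a theorem.
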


How do we reconcile this with the Rubinstein-Salzedo observation? The point is that the other cases with $g=2$ (where $G_2(K)^{ab}$ is $[4,4]$ or $[8,8]$ or ...) necessarily have $G_2(K)$ being a descendant of $Q_2(F)$ (see the earlier discussion of the $g=2$ O'Brien tree, as in Figure 1). It therefore has $\la 8,4 \ra$ as a quotient. It follows that all these other cases, accounting for $1/16$ of all cases, give rise to an unramified $\la 8,4 \ra$-extension, and so there are overall $7/16 + 1/16 = 1/2$ such cases!

\section{Speculation on infinite narrow class towers}

There is no explicit example known of a Galois group of an infinite $p$-extension of a number field, ramified at only finitely many primes, not including any primes above $p$. There has, however, been plenty of speculation as to what kind of pro-$p$ group this Galois group can or cannot be~\cite{B}. The goal of this section is to see whether our theory sheds any light on this matter. 

Suppose that a cyclic cubic field $K$ has an infinite narrow $2$-class tower. Since every $g$-generator $(g+2)$-relator pro-$p$ group is infinite if $g \geq 6$ by Golod-Shafarevich, our method below provides little useful information for large $g$. The simplest situation to consider is where $G^+_2(K)$ is a $2$-generator, so $4$-relator, pro-$2$ group. Note that we do not know of any such $K$ with infinite narrow $2$-class tower, but on the other hand we do not know one does not exist. 

What can we say then about $G^+_2(K)$, if such a $K$ does exist? By class field theory every open subgroup has finite abelianization (sometimes called the FAb property). Indeed, the Fontaine-Mazur conjecture~\cite{FM} applies to say that $G^+_2(K)$ should have no $2$-adic representations with infinite image.

Also, we might suspect that $G^+_2(K)$ is a $2$-special pro-$2$ group, using the obvious extension of Definition 4.4  to infinite groups. This allows us to carry out the experiment whereby we choose $u_1,u_2$ from $\Phi(F)$ and consider the $2$-special group $G:=\la x,y \mid v_1,\sigma(v_1),v_2,\sigma(v_2) \ra$, where $v_i = u_i^{-1}\sigma(u_i)$. Choosing random words (of some bounded length) in $\Phi(F)$ a million times, we put the corresponding $G$ through two filters.

The first filter checks that the order of $Q_c(G)$ continues to grow as $c$ increases. We check this at least up to $c=32$. The second filter checks that subgroups of $G$ of small index have finite abelianization. In practice, Magma allows us to check this for subgroups of index $1,2,4,8,16$ (with cores of $2$-power index so they do arise in the pro-$2$ group given by the presentation). 

While these are not complete checks that $G$ is infinite and FAb, they do filter out most presentations. We find that the following two types of groups are left. We study them via the sequence $\log_2 |Q_c(G)|$. The groups $G$ all have IPAD $[[4,4];[2,2,8];[2,2,2,2,4]]$.

A typical sequence for the first type of group is $2, 5, 8, 11, 15, 18, 23, 28, 30, 33, 36, 38, 41, 44, $ $46, 49, 52, 54, 57, 60, 62, 65, 68, 70, 73, 76, 78, 81, 84, 86, 89, 92, ...$. It is characterized by ultimate periodicity in the differences, which is a strong indication that $G$ is indeed infinite. It is, however, also an indication that the group may be $2$-adic analytic, which would be forbidden by Fontaine-Mazur. One can also compute abelianizations of normal subgroups of index at least up to $4096$ and see that their ranks are apparently bounded, indicating finite Pr\"ufer rank. The pro-$2$ groups of finite Pr\"ufer rank are exactly the $2$-adic analytic ones.

Apparently, then, the only surviving type of group is the second, whose sequence is $2, 5, 8, 11, 14, 16, 20, 24, 28, 31, 35, 39, 45, 53, 60, 65, 71, 78, 
88, 99, 109, 119, 131, 146, 162, 178, 192, \ \ \ $ 
$ 206, 224, 247, 269, 293, ...$ or similar. We do not have an exact description of this sequence. One example is for $u_1=(y^{-1}, x^{-1})^2, u_2=(yx^{-1}y^{-1}x^{-1})^2$. An element of order $2$ here is, for example, $(x^2y^{-2})^2$. The corresponding viable quotients have IPAD $[[4,4];[2,2,8];[2,2,2,2,4]]$, but what is most interesting about them is that they are finite, in fact isomorphic to a certain $2$-select group of $2$-class 5 and order $2^{11}.$ Whatever element of order $2$ is used appears to lead to the same group. \\[-2 pt]

\noindent{\sc Acknowledgements.}  We thank Brandon Alberts, Jordan Ellenberg, Farshid Hajir, Yuan Liu, John Voight, Jiuya Wang, and Melanie Matchett Wood for helpful conversations and feedback.  
The work of MRB was partially supported by summer Lenfest Grants from Washington and Lee University.

\end{document}